\documentclass[final]{siamltex}

\usepackage{mathrsfs}
\usepackage{color}
\usepackage{graphicx, epstopdf}
\usepackage{amsmath}
\usepackage{amsfonts}
\usepackage{amssymb}
\usepackage{textcomp}
\usepackage{latexsym}

\usepackage{threeparttable}





\newtheorem{remark}[theorem]{Remark}


\newcommand\ddelta\bigtriangledown
\newcommand\ld\lambda
\newcommand\Ld\Lambda

\def \bP {\Bbb P}

\def \bZ {\Bbb Z}

\title{Superconvergence of Local Discontinuous Galerkin method for
one-dimensional linear parabolic equations\thanks{The second author
was supported in part by the US National Science Foundation through
grant DMS-1115530.}}
\author{Waixiang Cao \footnotemark[2]\ \footnotemark[3]
\and Zhimin Zhang \footnotemark[2]\ \footnotemark[4]}

\begin{document}
\maketitle
\renewcommand{\thefootnote}{\fnsymbol{footnote}}
\footnotetext[2] {Beijing Computational Science Research Center,
Beijing, 100084, China.}
 \footnotetext[3]{ College of Mathematics and Computational Science and Guangdong Province Key Laboratory of Computational Science, Sun Yat-sen
          University, Guangzhou, 510275, China. }
 \footnotetext[4]{ Department of Mathematics, Wayne State
University, Detroit, MI 48202, USA.}


\begin{abstract}
  In this paper, we study superconvergence properties of the
   local discontinuous Galerkin method for one-dimensional linear
   parabolic equations when alternating fluxes are used. We prove,
   for any polynomial degree $k$, that the numerical fluxes  converge at a rate of
   $2k+1$ (or $2k+1/2$) for all mesh nodes and the domain average under some suitable initial discretization.
    We further prove a $k+1$th superconvergence rate for the derivative approximation
   and a $k+2$th superconvergence rate for the function value approximation at the Radau points.
    Numerical experiments demonstrate that in most cases, our error estimates are optimal,
   i.e., the error bounds are sharp.
\end{abstract}

\begin{keywords}
  Local discontinuous Galerkin method, superconvergence, parabolic, Radau
points, cell average, initial discretization
\end{keywords}

\begin{AMS}
 65M15, 65M60, 65N30
\end{AMS}

\section{ Introduction}

   The superconvergence behavior  of discontinuous Galerkin (DG) and
   local discontinuous Galerkin (LDG) \cite{Cockburn;Shu:SIAM1998} methods has been
   studied for some years.  Some early results can be found in Thom\'{e}e's 1997 book \cite{thomee}.
   Later, in \cite{Adjerid;Massey2006}, Adjerid et al. showed
   a $k+2$th superconvergence rate of the DG solution at the
   downwind-biased Radau points for  some ordinary differential equations; in
   \cite{Celiker.Cockburn}, Celiker and Cockburn studied
   superconvergence of the numerical traces for  DG and hybridizable DG methods in solving some steady state problems.
   Recently, Yang and Shu investigated superconvergence phenomenon of the DG method
   for hyperbolic conservation laws \cite{Yang;Shu:SIAM2012} and linear parabolic equations \cite{Yang;Shu:supLDG}
   in the one dimensional setting.
    Superconvergence properties of
   DG and LDG methods for hyperbolic and parabolic problems based on Fourier approach were
   studied in \cite{Guo_zhong_Qiu2013}. We also refer to
\cite{Adjerid;Weinhart2009,Adjerid;Weinhart2011,Xie;Zhang2012,Zhang;xie;zhang2009,Chen;Shu:JCP2008,Chen;Shu:SIAM2010}
  for an incomplete list of references.
   Very recently,  in \cite{Cao;zhang;zou:2k+1},  we  studied
   superconvergence properties of  a DG method for linear hyperbolic equations
   when upwind fluxes were used. We proved a
  $2k+1$th superconvergence rate of the DG approximation at the  downwind
  points (on average) as well as the domain average
  under suitable initial discretization.

  The current work is the second in a series to study superconvergence phenomena of the DG method
  in solving partial differential equations where parabolic equations are under concern.
  Our main result is a rigorous mathematical proof
   of the $2k+1$th (or $2k+1/2$th) superconvergence rate for
   the domain average and numerical fluxes at mesh nodes.
   To the best of our knowledge, the best rate proved so far in the literature is $k+2$ \cite{Yang;Shu:supLDG}.
   As a by-product, we also prove a point-wise $k+2$th superconvergence rate for the function value approximation
  and $k+1$th superconvergence rate for  the derivative approximation at the Radau (left or right)
  points.
  By doing so, we paint a full picture for superconvergence properties of the LDG method for liner parabolic equations
  in one space dimension.

   In order to establish the $2k+1$th superconvergence rate, some new analysis tools are needed.
  At the core of our analysis here is the construction of a correction function, which is super-close to the
  LDG solution. The correction function idea has been successfully applied to finite element methods (FEM) and finite volume
   methods (FVM) for elliptic equations (see, e.g. \cite{Cao;zhang;zou:2kFVM,Chen.C.M2012}),
    and more recently, to the LDG method for hyperbolic equations \cite{Cao;zhang;zou:2k+1}.
   However, the construction for parabolic equations is very different from steady state problems
   using finite element \cite{Chen.C.M2012} or finite volume methods \cite{Cao;zhang;zou:2kFVM} due to the time dependent
   effects.  Moreover, it is also quite different from the LDG method for hyperbolic equations \cite{Cao;zhang;zou:2k+1} due
   to the interplay between two correction functions.
   The main difficulty for parabolic equations lies in that
    correction functions for both variables (the exact solution $u$ and an auxiliary variable $q=u_x$)
    have to be constructed simultaneously.  To be more precise, we shall
   correct the error between the exact solution $(u,q)$ and its Gauss-Radau projection $(P_h^-u,P_h^+q)$ or $(P_h^+u,P_h^-q)$,
    depending on the choice of numerical fluxes. The construction not only is more complicated than that of
   hyperbolic equations, but also requires a novel idea to match the two variables.

  With help of the correction functions, we prove that the LDG
  solution $(u_h,q_h)$ is super-close with order $2k+1$ to our specially constructed
   interpolation function $(u_I,q_I)$ (defined in Section 3).
     It is this super-closeness that leads to the $2k+1$th superconvergence rate for the numerical fluxes at all nodes (on average)
     and for the domain average.

   To end this introduction, we would like to point out that all superconvergent results here are valid
    for one-dimensional linear systems, and the proof is along the same line without any difficulty.
     Our analysis also leads to some interesting new numerical discoveries,
    which will be reported in the last section.

  The rest of the paper is organized as follows. In Section 2, we present the LDG scheme
    for linear parabolic equations.
    Section 3 is the most technical part, where we
    construct some special functions to correct the error between the LDG
    solution and the Gauss-Radau projection of the exact solution.
   Section 4 is the main body of the paper, where superconvergence
   results are proved with suitable initial discretization.
   In Section 5, we provide some
   numerical examples to support our theoretical findings.  Finally,
    some possible future works and concluding remarks are presented in
   Section 6.

   Throughout this paper,  we adopt standard notations for Sobolev spaces such as $W^{m,p}(D)$ on sub-domain $D\subset\Omega$ equipped with
    the norm $\|\cdot\|_{m,p,D}$ and semi-norm $|\cdot|_{m,p,D}$. When $D=\Omega$, we omit the index $D$; and if $p=2$, we set
   $W^{m,p}(D)=H^m(D)$,
   $\|\cdot\|_{m,p,D}=\|\cdot\|_{m,D}$, and $|\cdot|_{m,p,D}=|\cdot|_{m,D}$. Notation``$A\lesssim B$" implies that $A$ can be
  bounded by $B$ multiplied by a constant independent of the mesh size $h$.
  ``$A\sim B$" stands for $``A\lesssim B"$ and $``B\lesssim A"$.

\section{ LDG schemes}
  We consider local discontinuous Galerkin (LDG) method for the following one-dimensional linear parabolic equation
\begin{eqnarray}\label{con_laws}
\begin{aligned}
   &u_t=u_{xx},\ \ &&(x,t)\in [0,2\pi]\times(0,T], \\
   &u(x,0)=u_0(x),\ \  &&x\in R,
\end{aligned}
\end{eqnarray}
  where $u_0$ is sufficiently smooth. We will consider both the periodic boundary
  condition $u(0,t)=u(2\pi,t)$ and the  mixed  boundary
   condition $u(0,t)=g_0(t), u_x(2\pi,t)=g_1(t)$ or  $u_x(0,t)=g_0(t),
   u(2\pi,t)=g_1(t)$.

  Let $\Omega=[0,2\pi]$ and $0=x_{\frac 12}<x_{\frac 32}<\ldots<x_{N+\frac 12} = 2\pi$
  be $N+1$ distinct points on the interval $\bar{\Omega}$.
  For  any positive integer $r$, we define $\bZ_{r}=\{1,\ldots,r\}$ and denote by
\[
    \tau_j=(x_{j-\frac 12},x_{j+\frac 12}),\ \ x_j=\frac 12(x_{j-\frac 12}+x_{j+\frac
    12}),\ j\in\bZ_N
\]
  the cells and cell centers, respectively.
  Let $h_j=x_{j+\frac 12}-x_{j-\frac 12}$,  $\bar{h}_j = h_j/2$
  and $h =  \displaystyle\max_j\; h_j$. We assume  that the mesh
  is quasi-uniform, i.e., there exists a constant $c$ such that
 $h\le c h_j$, $j\in\bZ_N$. Define the finite element space
\[
    V_h=\{ v: \; v|_{\tau_j}\in P_k(\tau_j),\; j\in\bZ_N\}^{},
\]
  where $P_k$ denotes the space of
  polynomials of degree at most $k$  with coefficients as functions of $t$.

  To construct the LDG scheme, we introduce an auxiliary variable
  $q=u_x$, then \eqref{con_laws} can be rewritten as a first order
  linear system
\begin{equation}\label{system}
   u_t=q_x, \quad q=u_x.
\end{equation}

  The LDG scheme for \eqref{con_laws} reads as: Find $u_h,q_h\in V_h$
  such that for any $v,w\in  V_h$
\begin{eqnarray}\label{LDG_scheme}
\begin{aligned}
   &(u_{ht},v)_j=-(q_{h},v_x)_j+\hat{q}_hv^-|_{j+\frac 12}-\hat{q}_hv^+|_{j-\frac 12}, \\
   &(q_h,w)_j=-(u_{h},w_x)_j+\hat{u}_hw^-|_{j+\frac 12}-\hat{u}_hw^+|_{j-\frac 12}.
\end{aligned}
\end{eqnarray}
   Here  $(u,v)_j=\int_{\tau_j}uv dx$,
   $v^-|_{j+\frac 12}$ and $v^+|_{j+\frac12}$  denote the left
  and right limits of $v$ at the point $x_{j+\frac
    12}$, respectively, and $ \hat{u}_h, \hat{q}_h$ are numerical
    fluxes. For LDG schemes, we consider alternating fluxes
\begin{equation}\label{flux1}
    \hat{u}_h=u_h^-,\ \ \ \hat{q}_h=q_h^+,
\end{equation}
  or
\begin{equation}\label{flux2}
    \hat{u}_h=u_h^+,\ \ \ \hat{q}_h=q_h^-.
\end{equation}
  In this paper, we use  both \eqref{flux1} and \eqref{flux2} as numerical
  fluxes in the periodic boundary condition, \eqref{flux1} in the
 mixed boundary condition $u(0,t)=g_0(t),
u_x(2\pi,t)=g_1(t)$, and \eqref{flux2} in the mixed  boundary
condition $u_x(0,t)=g_0(t), u(2\pi,t)=g_1(t)$.

  Define
\[
   H_h^1=\{v: \; v|_{\tau_j}\in H^1(\tau_j),\; j\in\bZ_N\}
\]
  and for all $\xi,\eta,v\in H_h^1$,  let
\[
    a^1(\xi,\eta;v)=\sum_{j=1}^{N}a^1_j(\xi,\eta;v),\ \ \ a^2(\xi,\eta;v)=\sum_{j=1}^{N}a^2_j(\xi,\eta;v)
\]
  where
\begin{eqnarray*}
 && a^1_j(\xi,\eta;v)=(\xi_t,v)_j+(\eta,v_x)_j-\hat{\eta}v^-|_{j+\frac 12}+\hat{\eta}v^+|_{j-\frac
  12},\\
  && a^2_j(\xi,\eta;v)=(\eta,v)_j+(\xi,v_x)_j-\hat{\xi}v^-|_{j+\frac 12}+\hat{\xi}v^+|_{j-\frac
  12}.
\end{eqnarray*}
  Here $\hat{\xi},\hat{\eta}$ are taken as the alternating fluxes
  \eqref{flux1} or \eqref{flux2}.
  Then the LDG scheme \eqref{LDG_scheme} can be rewritten as
\begin{equation}\label{solu1}
a^1(u_h, q_h; v)=0, \ \ \ a^2(u_h,q_h;w)=0,  \quad \forall v,w\in
V_h.
\end{equation}
Obviously, the exact solutions $u,q$  also satisfy
\begin{equation}\label{solu2}
  a^1(u, q; v)=0, \ \ \ a^2(u,q;w)=0,  \quad \forall v,w\in V_h.
\end{equation}
 By a direct calculation, there hold
\begin{equation}\label{p1}
   a^1(v, w; v)+a^2(v, w; w)=(v_t,v)+(w,w)-w^+v^-|_{N+\frac 12}+w^+v^-|_{\frac 12}
\end{equation}
 for the fluxes choice \eqref{flux1} and
\begin{equation}\label{p2}
    a^1(v, w; v)+a^2(v, w; w)=(v_t,v)+(w,w)-w^-v^+|_{N+\frac 12}+w^-v^+|_{\frac 12}.
\end{equation}
for the fluxes choice \eqref{flux2}.


\section{ Construction of special interpolation functions}
\setcounter{equation}{0}
 Our goal here is
 to construct  a special interpolation function $(u_I,q_I)$, which is
  superclose to the LDG solution $(u_h,q_h)$.

  We begin with some preliminaries. First, for any $r$, we denote by $\lfloor r\rfloor$ the maximal integer
  no more than $r$, and $\lceil r\rceil$ the minimal integer  no less than $r$. Next, we define
   on $v\in H_h^1$, two Gauss-Radau projections $P_h^-,P_h^+$ by
\begin{eqnarray*}\label{gr}
&(P^-_hv,w)_j=(v,w)_j,\forall
w\in\bP^{k-1}(\tau_j)\quad\text{and}\quad
P^-_hv(x_{j+\frac12}^-)=v(x_{j+\frac12}^-),\\
\label{gr1}
 &(P^+_hv,w)_j=(v,w)_j,\forall
w\in\bP^{k-1}(\tau_j)\quad\text{and}\quad
P^+_hv(x_{j-\frac12}^+)=v(x_{j-\frac12}^+),
\end{eqnarray*}
   and  an integral operator $D^{-1}_s$ by
\[
     D^{-1}_sv(x)=\frac{1}{\bar{h}_j}\int_{x_{j-\frac 12}}^xv(x') dx'=\int_{-1}^s\hat{v}(s')ds', \quad x\in
     \tau_j, j\in\bZ_N,
\]
  where
\[
    s=(x-x_j)/\bar{h}_j\in[-1,1],\ \ \hat{v}(s)=v(x).
\]

   We have, for any function $v\in H_h^1$, the following Legendre expansion in each element $\tau_j, j\in\bZ_N$,
\[
   v(x,t)=\sum_{m=0}^{\infty}v_{j,m}(t)L_{j,m}(x),\ \ \  v_{j,m}=\frac{2m+1}{h_j}(v,L_{j,m})_j,
\]
  where $L_{j,m}$ denotes the  normalized Legendre polynomial of degree $m$ on
  $\tau_j$. By the definition of $P_h^-,P_h^+$,
\begin{eqnarray*}\label{expansion}
   &&(v-P^-_hv)(x,t)=\bar{v}_{j,k}(t)L_{j,k}+\sum_{m=k+1}^\infty
   v_{j,m}(t)L_{j,m}(x),\\
   &&(v-P^+_hv)(x,t)=\tilde{v}_{j,k}(t)L_{j,k}+\sum_{m=k+1}^\infty
   v_{j,m}(t)L_{j,m}(x),
\end{eqnarray*}
  where
\begin{eqnarray}\label{coefficient}
&&\bar{v}_{j,k}=-v(x^-_{j+\frac 12},t)+\frac{1}{h_j}\int_{\tau_j}
v(x,t)\sum_{m=0}^{k}(2m+1)L_{j,m}(x)dx, \\
\label{coefficient_0}
 &&\tilde{v}_{j,k}=(-1)^{k+1}v(x^+_{j-\frac
12},t)+\frac{1}{h_j}\int_{\tau_j}
v(x,t)\sum_{m=0}^{k}(-1)^{k+m}(2m+1)L_{j,m}(x)dx.
\end{eqnarray}
  Obviously,
\begin{equation}\label{inter_rep}
   (v-P^-_hv, w)_j=\bar{v}_{j,k}(L_{j,k},w)_j,\ \ \ (v-P^+_hv, w)=\tilde{v}_{j,k}(L_{j,k},w)_j,\ \ \forall w\in V_h.
\end{equation}

   In each element $\tau_j, j\in\bZ_N$, we define
\begin{eqnarray}\label{correcti}
  && F_{1,1}=P_h^+D^{-1}_sL_{j,k},\ \ F_{1,i}=(P_h^+D^{-1}_sP_h^-D^{-1}_s)^iF_{1,1},\ i\ge 2,\\
  \label{correctii}
  && F_{2,1}=P_h^-D^{-1}_sL_{j,k},\ \ F_{2,i}=(P_h^-D^{-1}_sP_h^+D^{-1}_s)^{i}F_{2,1},\ i\ge 2.
\end{eqnarray}


\begin{lemma}\label{lemma:1}
  For all $1\le i\le  \lceil k/2\rceil, x\in\tau_j,j\in\bZ_N$,
  $F_{1,i}(x), F_{2,i}(x)$ have the following representations
\begin{eqnarray}\label{fir}
 &F_{1,i}(x)=\sum_{m=k-2i+2}^ka_{i,m}(L_{j,m}+L_{j,m-1})(x),\\
 \label{fir2}
 &F_{2,i}(x)=\sum_{m=k-2i+2}^kb_{i,m}(L_{j,m}-L_{j,m-1})(x),
\end{eqnarray}
  where the coefficients $a_{i,m},b_{i,m}$ are some bounded constants independent of the mesh
size $h_j$. Consequently,
\begin{eqnarray}\label{fp1}
& F_{1,i}(x_{j-\frac12}^+)=0, \quad \|F_{1,i}\|_{0,\infty,\tau_j}\lesssim 1, \\
\label{fp2}
 &F_{2,i}(x_{j+\frac12}^-)=0, \quad \|F_{2,i}\|_{0,\infty,\tau_j}\lesssim 1.
\end{eqnarray}
\end{lemma}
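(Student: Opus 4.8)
The plan is to prove the two representations \eqref{fir}--\eqref{fir2} simultaneously by induction on $i$, and then to read off \eqref{fp1}--\eqref{fp2} as easy corollaries. The essential algebraic input is an explicit understanding of how the composite operator $P_h^+ D_s^{-1}$ (respectively $P_h^- D_s^{-1}$) acts on a single shifted Legendre polynomial $L_{j,m}$. First I would record, on the reference interval $[-1,1]$ with $\ell_m$ the standard Legendre polynomial, the classical antiderivative identity $\int_{-1}^s \ell_m(s')\,ds' = \tfrac{1}{2m+1}\bigl(\ell_{m+1}(s)-\ell_{m-1}(s)\bigr)$ for $m\ge 1$ (and the separate formula $\int_{-1}^s \ell_0 = \ell_1+\ell_0$ up to normalization). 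Pulling this back to $\tau_j$, $D_s^{-1}L_{j,m}$ is a fixed linear combination of $L_{j,m+1},L_{j,m-1}$ (plus, for $m=0$, an $L_{j,0}$ term), with $h_j$-independent coefficients. Next I would compute $P_h^+ D_s^{-1}L_{j,m}$: since $P_h^+$ fixes polynomials of degree $\le k$, the only interesting case is $m=k$, where $D_s^{-1}L_{j,k}$ has a degree-$(k+1)$ piece $c\,L_{j,k+1}$ that must be projected; using \eqref{coefficient_0} one shows $P_h^+L_{j,k+1}$ equals a constant times $L_{j,k}$ minus the same constant times $L_{j,k-1}$ — i.e. proportional to $(L_{j,k}-L_{j,k-1})$ — the point being that $P_h^+$ applied to a Legendre mode of degree $k+1$ produces a \emph{two-term} combination of the top two retained modes with equal and opposite coefficients. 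Dually, $P_h^-L_{j,k+1}$ is proportional to $(L_{j,k}+L_{j,k-1})$ by \eqref{coefficient}. This sign pattern is exactly what makes the two families $F_{1,i}$ and $F_{2,i}$ have the $+$ versus $-$ structure asserted in the lemma.

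With these building blocks, the base case $i=1$ is direct: $F_{1,1}=P_h^+D_s^{-1}L_{j,k}$; expanding $D_s^{-1}L_{j,k} = \alpha L_{j,k+1}+\beta L_{j,k-1}$ and applying $P_h^+$ gives $\alpha\,P_h^+L_{j,k+1}+\beta L_{j,k-1}$, which by the previous paragraph is of the form $a_{1,k}(L_{j,k}+L_{j,k-1})$ after collecting terms — matching \eqref{fir} with the sum running over $m=k$ only (since $k-2\cdot 1+2=k$). Similarly $F_{2,1}=b_{1,k}(L_{j,k}-L_{j,k-1})$. For the inductive step, suppose \eqref{fir} holds for $F_{1,i}$; then $F_{1,i+1}=P_h^+D_s^{-1}P_h^-D_s^{-1}F_{1,i}$. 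One applies $D_s^{-1}$ to each $(L_{j,m}+L_{j,m-1})$ block (raising the top degree by one to $k+1$ and lowering the bottom), then $P_h^-$ (which, by the $+$-sign rule, folds the degree-$(k+1)$ mode back into a $(L_{j,k}+L_{j,k-1})$ combination and leaves lower modes untouched), then $D_s^{-1}$ again, then $P_h^+$ (folding back with the structure-preserving pattern). Bookkeeping the degrees shows the output is a combination of $(L_{j,m}+L_{j,m-1})$ for $m$ running from $k-2(i+1)+2$ up to $k$; the constraint $i+1\le\lceil k/2\rceil$ guarantees $k-2(i+1)+2\ge 1$ (or $\ge 0$ with a harmless $L_{j,0}$ convention), so no spurious negative-index Legendre polynomials appear and the induction closes. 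The coefficients, being finite products and sums of the fixed constants from the antiderivative and projection formulas, are bounded independently of $h_j$. The analogous computation with the roles of $P_h^+$ and $P_h^-$ interchanged gives \eqref{fir2}.

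Finally, \eqref{fp1}--\eqref{fp2} follow immediately. For the endpoint values: evaluating $L_{j,m}+L_{j,m-1}$ at $x_{j-\frac12}^+$, i.e. at reference point $s=-1$, gives $\ell_m(-1)+\ell_{m-1}(-1)=(-1)^m+(-1)^{m-1}=0$, so $F_{1,i}(x_{j-\frac12}^+)=0$; symmetrically $\ell_m(1)-\ell_{m-1}(1)=1-1=0$ yields $F_{2,i}(x_{j+\frac12}^-)=0$. For the $L^\infty$ bounds: on $\tau_j$ each $\|L_{j,m}\|_{0,\infty,\tau_j}=1$, the sum has at most $2i\le k+1$ terms, and the coefficients are $O(1)$, so $\|F_{1,i}\|_{0,\infty,\tau_j}\lesssim 1$ and likewise for $F_{2,i}$. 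The main obstacle is not conceptual but combinatorial: one must keep careful track of how each $P_h^\pm$ acts on the single ``overflow'' mode of degree $k+1$ created by $D_s^{-1}$ and verify that the characteristic $\pm$ sign between $L_{j,k}$ and $L_{j,k-1}$ is reproduced at every stage of the composition — this sign consistency, together with the degree count that uses $i\le\lceil k/2\rceil$ to avoid underflow below degree $0$, is the heart of the lemma.
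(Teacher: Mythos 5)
Your overall strategy---induction on $i$ driven by the Legendre antiderivative identity $D_s^{-1}L_{j,m}=\frac{1}{2m+1}(L_{j,m+1}-L_{j,m-1})$ and the action of the Gauss--Radau projections on the single overflow mode $L_{j,k+1}$---is exactly the paper's, and your endpoint evaluations and sup-norm bound at the end match its argument. However, there is a concrete error in the step you yourself call the heart of the lemma: the formulas for $P_h^{\pm}L_{j,k+1}$. By definition $(P_h^+v-v,w)_j=0$ for all $w\in\bP^{k-1}(\tau_j)$; since $L_{j,k+1}$ is orthogonal to $\bP^{k-1}(\tau_j)$, so is $P_h^+L_{j,k+1}$, which therefore has \emph{no} $L_{j,k-1}$ component. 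It must be a multiple of $L_{j,k}$ alone, and the endpoint condition at $x_{j-\frac12}^+$ gives $P_h^+L_{j,k+1}=-L_{j,k}$ (likewise $P_h^-L_{j,k+1}=L_{j,k}$). Your claim that $P_h^+L_{j,k+1}$ is proportional to $L_{j,k}-L_{j,k-1}$ and $P_h^-L_{j,k+1}$ to $L_{j,k}+L_{j,k-1}$ is therefore false, and with it the asserted mechanism that the projection of the overflow mode by itself produces the paired two-term blocks.

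The correct source of the $(L_{j,m}\pm L_{j,m-1})$ pairing is the interplay between the single-term projection $P_h^{\pm}L_{j,k+1}=\mp L_{j,k}$ and the $L_{j,m+1}-L_{j,m-1}$ structure supplied by $D_s^{-1}$: for instance $P_h^+D_s^{-1}L_{j,k}=\frac{1}{2k+1}\bigl(P_h^+L_{j,k+1}-L_{j,k-1}\bigr)=-\frac{1}{2k+1}(L_{j,k}+L_{j,k-1})$, where the $L_{j,k-1}$ term comes from the antiderivative, not from the projection. The paper then verifies that the pairing persists through the induction by an explicit coefficient recursion (its $\beta_{i,m}$ and $a_{i+1,m}$, obtained by alternately expanding $P_h^-D_s^{-1}$ and $P_h^+D_s^{-1}$ of the block sums). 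An alternative repair, closer in spirit to what you wrote, is to note that $P_h^+D_s^{-1}w$ always vanishes at $x_{j-\frac12}^+$ (because $D_s^{-1}w$ does and $P_h^+$ preserves the left endpoint value), and that a combination $\sum_m c_mL_{j,m}$ vanishing at the left endpoint is exactly one expressible in $(L_{j,m}+L_{j,m-1})$ blocks; symmetrically for $P_h^-D_s^{-1}$ at the right endpoint. Either route closes the gap, but as written your bookkeeping rests on an incorrect projection formula and would propagate wrong coefficients.
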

\begin{proof}
  For all $m\ge 1$, noticing that $\|L_{j,m}\|_{0,\infty,\tau_j}=1$ and
\[
  (L_{j,m}+L_{j,m-1})(x^+_{j-\frac 12})=0,\ \  (L_{j,m}-L_{j,m-1})(x^-_{j+\frac 12})=0,
\]
  then \eqref{fp1}-\eqref{fp2} follow directly from \eqref{fir}-\eqref{fir2}.

   In the following, We shall focus our attention on \eqref{fir} since \eqref{fir2} can be obtained by following the
   same line. We show \eqref{fir} by induction. First, by the definition of $P_h^+$ and the fact that
\begin{equation}\label{ll}
 D^{-1}_sL_{j,m}=\frac{1}{2m+1}(L_{j,m+1}-L_{j,m-1}),\  m\ge 1,
 \end{equation}
  we derive
\begin{equation*}\label{F1z}
   F_{1,1}=-\frac{1}{2k+1}(L_{j,k}+L_{j,k-1}),
\end{equation*}
which implies \eqref{fir} is valid for $i=1$ with
$a_{1,k}=-\frac{1}{2k+1}.$ Now we suppose \eqref{fir} is valid for
$i, i\le \lceil k/2\rceil-1$. Since
\[
 P_h^-L_{j,k+1}=L_{j,k}, \quad P_h^+L_{j,k+1}=-L_{j,k},   \quad P_hL_{j,m}=L_{j,m}, \quad 1\le m\le k,
 \]
 where $P_h=P_h^-$ or $P_h^+$, it is easy to deduce from \eqref{ll} that
\begin{eqnarray*}
    &&P_h^-D^{-1}_sL_{j,k}=\frac{1}{2k+1}(L_{j,k}-L_{j,k-1}),\ \ P_h^+D^{-1}_sL_{j,k}=\frac{-1}{2k+1}(L_{j,k}+L_{j,k-1}),\\
    && P_hD^{-1}_sL_{j,m}=\frac{1}{2m+1}(L_{j,m+1}-L_{j,m-1}),\ \  1\le m\le
    {k-1}.
\end{eqnarray*}
 Therefore,
\begin{equation}\label{P-FF}
  P_h^{-}D_s^{-1}F_{1,i}=\sum_{m=k-2i+1}^{k}\beta_{i,m}(L_{j,m}-L_{j,m-1}),
\end{equation}
  where
\[
   \beta_{i,m} =\frac{a_{i,m+1}+a_{i,m}}{2m+1}+\frac{a_{i,m}+a_{i,m-1}}{2m-1}
\]
  with $a_{i,k+1}=a_{i,k-2i+1}=a_{i,k-2i}=0$. Now we consider $F_{1,i+1}$. Note that
\[
   F_{1,i+1}=P_h^{+}D_s^{-1}P_h^{-}D_s^{-1}F_{1,i},
\]
  we have from \eqref{P-FF}
\begin{eqnarray*}
   F_{1,i+1}&=&\sum_{m=k-2i+1}^{k}\beta_{i,m}P_h^{+}D_s^{-1}(L_{j,m}-L_{j,m-1})\\
   &=&
   \sum_{m=k-2i}^{k}a_{i+1,m}(L_{j,m}+L_{j,m-1}),
\end{eqnarray*}
 where
\[
   a_{i+1,m}=\frac{\beta_{i,m+1}-\beta_{i,m}}{2m+1}+\frac{\beta_{i,m-1}-\beta_{i,m}}{2m-1}
\]
  with $\beta_{i,k+1}=\beta_{i,k-2i}=\beta_{i,k-2i-1}=0$.
Consequently, \eqref{fir} is valid for $i+1$. Then \eqref{fir}
follows. This completes our proof.
\end{proof}

    With the functions  $F_{1,i},F_{2,i}$, we define in each $\tau_j,j\in\bZ_N$ other two functions
    $\bar{F}_{1,i},\bar{F}_{2,i}$ as
\begin{equation}\label{barF}
   \bar{F}_{1,i}=P_h^{-}D_s^{-1}F_{1,i},\ \
   \bar{F}_{2,i}=P_h^{+}D_s^{-1}F_{2,i},\  1\le i \le \lfloor k/2 \rfloor.
\end{equation}
  By the same arguments as in Lemma \ref{lemma:1}, we obtain
\begin{eqnarray}\label{P-F1}
  &&\bar{F}_{1,i}=\sum_{m=k-2i+1}^{k}\beta_{i,m}(L_{j,m}-L_{j,m-1}),\\
  \label{P-F2}
  &&\bar{F}_{2,i}=\sum_{m=k-2i+1}^{k}\gamma_{i,m}(L_{j,m}+L_{j,m-1}),
\end{eqnarray}
  where  $\beta_{i,m}, \gamma_{i,m}$  are constants independent of $h_j$.
  Consequently,
\begin{eqnarray}\label{fp3}
& \bar{F}_{1,i}(x_{j+\frac12}^-)=0, \quad \|\bar{F}_{1,i}\|_{0,\infty,\tau_j}\lesssim 1, \\
\label{fp4}
 &\bar{F}_{2,i}(x_{j-\frac12}^+)=0, \quad \|\bar{F}_{2,i}\|_{0,\infty,\tau_j}\lesssim 1.
\end{eqnarray}
   In addition,   a straightforward calculation from \eqref{correcti}-\eqref{correctii} and
   \eqref{barF} yields
\begin{equation}\label{cf1}
   F_{1,i+1}=P^+_hD_s^{-1}\bar{F}_{1,i},\ \ \quad
   F_{2,i+1}=P^-_hD_s^{-1}\bar{F}_{2,i},\  1\le i \le \lfloor k/2 \rfloor.
\end{equation}

\subsection{Correction functions for the fluxes \eqref{flux1}}
  In each element $\tau_j,j\in\bZ_N$, we have, from \eqref{inter_rep},
\begin{equation}\label{iter_rep1}
   (u-P_h^-u,v)_j=\bar{u}_{j,k}(t)(L_{j,k},v)_j,\ \  (q-P_h^+q,v)_j=\tilde{q}_{j,k}(t)(L_{j,k},v)_j,\ \ \forall v\in V_h,
\end{equation}
  where the coefficients $\bar{u}_{j,k},\tilde{q}_{j,k}$ are given by \eqref{coefficient}-\eqref{coefficient_0}.
  Let
\[
G_i(t)=\bar{u}^{(i)}_{j,k}(t),\ \ \
   Q_i(t)=\tilde{q}^{(i)}_{j,k}(t),\ \ 0\le i\le \lceil
   k/2\rceil.
\]
By the standard approximation theory, if $u\in
W^{k+2+2i,\infty}(\Omega)$,
\begin{eqnarray}\label{appro}
   &&|G_{i}|=|D_t^{i}\bar{u}_{j,k}|\lesssim h^{k+1}\|\partial^i_tu\|_{k+1,\infty,\tau_j}\lesssim h^{k+1}\|u\|_{k+1+2i,\infty,\tau_j}, \\
   \label{appro1}
   && |Q_{i}|=|D_t^{i}\tilde{q}_{j,k}|\lesssim
   h^{k+1}\|\partial^i_tu\|_{k+2,\infty,\tau_j}\lesssim
   h^{k+1}\|u\|_{k+2+2i,\infty,\tau_j}.
\end{eqnarray}

  Now we are ready to construct our correction functions. For all $1\le l\le
  k$,
we define, first at the boundary points $x=x_{ \frac 12}$ and $x=x_{
N+\frac 12}$,
\begin{equation}\label{corr_boundary}
   W^l_1(x^+_{ N+\frac 12},t)=0,\ \ \  W^l_2(x^-_{ \frac 12},t)=0,\ \ \forall t\ge 0,
\end{equation}
  and then in each element $\tau_j, j\in\bZ_N$,
\begin{equation}\label{corr_func}
   W^l_1(x,t)=\sum_{i=1}^{\lceil l/2\rceil}{w}_{1,i}+\sum_{i=1}^{\lfloor l/2\rfloor}\bar{w}_{2,i},\ \
   W_2^l(x,t)=\sum_{i=1}^{\lfloor l/2\rfloor}\bar{w}_{1,i}+\sum_{i=1}^{\lceil l/2\rceil}{w}_{2,i},
\end{equation}
   where
\begin{eqnarray}\label{correctu}
    && {w}_{1,i}=\bar{h}_j^{2i-1}G_i{F}_{1,i},\ \ \  \ \bar{w}_{1,i}=\bar{h}_j^{2i}G_i\bar{F}_{1,i},\\
    \label{correctq}
   && {w}_{2,i}=\bar{h}_j^{2i-1}Q_{i-1}{F}_{2,i}, \ \ \  \bar{w}_{2,i}=\bar{h}_j^{2i}Q_i\bar{F}_{2,i}.
\end{eqnarray}

\begin{lemma}\label{lemma2}
 Suppose $W^l_1,W^l_2\in V_h$ are defined by
\eqref{corr_boundary}-\eqref{correctq}. Then
\begin{eqnarray}\label{corr1}
    W^l_1(x_{j-\frac 12}^+,t)=0,\ \ \ W^l_2(x_{j-\frac 12}^-,t)=0,\ \
    \forall j\in\bZ_{N+1}.
\end{eqnarray}
Moreover, if $l=2r$ is even,
\begin{eqnarray}\label{cor1}
    &(W^l_{2t},v)_j+(W^l_1,v_x)_j=(w_{1,1},v_x)_j+(\bar{w}_{1,rt},v)_j\\
    \label{cor2}
    &(W^l_2,v_x)_j+(W^l_1,v)_j=(w_{2,1},v_x)_j+(\bar{w}_{2,r},v)_j,
\end{eqnarray}
   if $l=2r+1$ is odd,
\begin{eqnarray}\label{cor3}
    &(W^l_{2t},v)_j+(W^l_1,v_x)_j=(w_{1,1},v_x)_j+(w_{2,r+1t},v)_j\\
    \label{cor4}
    &(W^l_2,v_x)_j+(W^l_1,v)_j=(w_{2,1},v_x)_j+(w_{1,r+1},v)_j.
\end{eqnarray}

\begin{proof}
   By \eqref{fp1}-\eqref{fp2} and \eqref{fp3}-\eqref{fp4},
\[
   w_{1,i}(x_{j-\frac 12}^+,t)=\bar{w}_{2,i}(x_{j-\frac 12}^+,t)=0,\ \
   w_{2,i}(x_{j+\frac 12}^-,t)=\bar{w}_{1,i}(x_{j+\frac 12}^-,t)=0,\
   \ j\in\bZ_N.
\]
    Then \eqref{corr1} follows from \eqref{corr_boundary}-\eqref{corr_func}.

   We now show \eqref{cor1}-\eqref{cor4}. For any integer $l, 1\le l\le k$,
   a direct calculation from \eqref{fir}-\eqref{fir2} and
   \eqref{P-F1}-\eqref{P-F2} gives
\[
   D_s^{-1}{F}_{1,i}(x^-_{j+\frac 12})=D_s^{-1}{F}_{1,i}(x^+_{j-\frac
   12})=0,\ \ D_s^{-1}{F}_{2,i}(x^-_{j+\frac 12})=D_s^{-1}{F}_{2,i}(x^+_{j-\frac 12})=0
\]
  for all $i\in\bZ_{\lfloor l/2\rfloor}$, and
\[
   D_s^{-1}\bar{F}_{1,i}(x^-_{j+\frac
12})=D_s^{-1}\bar{F}_{1,i}(x^+_{j-\frac 12})=0,\ \
D_s^{-1}\bar{F}_{2,i}(x^-_{j+\frac
12})=D_s^{-1}\bar{F}_{2,i}(x^+_{j-\frac 12})=0
\]
  for all $i\in\bZ_{\lfloor l/2\rfloor-1}$ in case $l=2r$ and  $i\in\bZ_{\lfloor
  l/2\rfloor}$ in case $l=2r+1$. Then by integration by parts, \eqref{barF} and
  \eqref{cf1},
\begin{eqnarray*}
   (\bar{w}_{1,i},v_x)_j+({w}_{1,i},v)_j&=&\bar{h}_j^{2i}G_i(\bar{F}_{1,i},v_x)_j+\bar{h}_j^{2i-1}G_i(F_{1,i},v)_j\\
   &=&\bar{h}_j^{2i}G_i(\bar{F}_{1,i}-D_s^{-1}F_{1,i},v_x)_j=0,\\
  (w_{2,it},v)_j+(\bar{w}_{2,i},v_x)_j&=&\bar{h}_j^{2i-1}Q_{i}({F}_{2,i},v)_j+\bar{h}_j^{2i}Q_{i}(\bar{F}_{2,i},v_x)_j\\
  &=&\bar{h}_j^{2i}Q_{i}(\bar{F}_{2,i}-D_s^{-1}F_{2,i},v_x)_j=0
\end{eqnarray*}
  for all $i\in\bZ_{\lfloor l/2\rfloor}$, and
\begin{eqnarray*}
   (\bar{w}_{1,it},v)_j+({w}_{1,i+1},v_x)_j&=&\bar{h}_j^{2i}G_{i+1}(\bar F_{1,i},v)_j+\bar{h}_j^{2i+1}G_{i+1}({F}_{1,i+1},v_x)_j\\
    &=&\bar{h}_j^{2i+1}G_{i+1}({F}_{1,i+1}-D_s^{-1}\bar{F}_{1,i},v_x)_j=0,\\
 (w_{2,i+1},v_x)_j+(\bar{w}_{2,i},v)_j&=&\bar{h}_j^{2i+1}Q_{i}({F}_{2,i+1},v_x)_j+\bar{h}_j^{2i}Q_{i}(\bar F_{2,i},v)_j\\
 &=&\bar{h}_j^{2i+1}Q_{i}(F_{2,i+1}-D_s^{-1}\bar{F}_{2,i},v_x)_j=0
\end{eqnarray*}
  for all $i\in\bZ_{\lfloor l/2\rfloor-1}$ in case $l=2r$ and  $i\in\bZ_{\lfloor
  l/2\rfloor}$ in case $l=2r+1$.
Then the desired results \eqref{cor1}-\eqref{cor4} follow by summing
over all $i$.
\end{proof}
\end{lemma}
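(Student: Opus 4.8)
The plan is to reduce the lemma to a handful of one-line integration-by-parts identities and then telescope. The first claim \eqref{corr1} needs almost no work: by \eqref{corr_func} the restriction of $W_1^l$ to $\tau_j$ ($j\in\bZ_N$) is a sum of the pieces $w_{1,i}$ and $\bar{w}_{2,i}$ from \eqref{correctu}--\eqref{correctq}, and these vanish at $x_{j-\frac12}^+$ by \eqref{fp1} and \eqref{fp4} respectively; at the right endpoint ($j=N+1$) the value is fixed to zero in \eqref{corr_boundary}. Symmetrically, near $x_{j-\frac12}^-$ the function $W_2^l$ lives in $\tau_{j-1}$ (or is the boundary value of \eqref{corr_boundary} when $j=1$) and is there a sum of $\bar{w}_{1,i}$ and $w_{2,i}$, which vanish at $x_{(j-1)+\frac12}^-$ by \eqref{fp3} and \eqref{fp2}. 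So I would dispose of \eqref{corr1} in two lines.

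For \eqref{cor1}--\eqref{cor4} I would first prove the four pairwise cancellations
\begin{align*}
 (\bar{w}_{1,i},v_x)_j+(w_{1,i},v)_j=0,&\qquad (w_{2,it},v)_j+(\bar{w}_{2,i},v_x)_j=0,\\
 (\bar{w}_{1,it},v)_j+(w_{1,i+1},v_x)_j=0,&\qquad (w_{2,i+1},v_x)_j+(\bar{w}_{2,i},v)_j=0,
\end{align*}
for every $v\in V_h$ and every index $i$ occurring in the sums of \eqref{corr_func}. Each is the same computation. For the first, use $\bar{F}_{1,i}=P_h^-D_s^{-1}F_{1,i}$ from \eqref{barF}; since $v_x\in\bP^{k-1}(\tau_j)$ and $P_h^-$ reproduces moments against $\bP^{k-1}(\tau_j)$, one has $(\bar{F}_{1,i},v_x)_j=(D_s^{-1}F_{1,i},v_x)_j$, and because $\frac{d}{dx}D_s^{-1}F_{1,i}=F_{1,i}/\bar{h}_j$, integration by parts gives $(D_s^{-1}F_{1,i},v_x)_j=\bigl[D_s^{-1}F_{1,i}\cdot v\bigr]_{x_{j-1/2}}^{x_{j+1/2}}-\frac{1}{\bar{h}_j}(F_{1,i},v)_j$. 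The bracketed boundary term vanishes: by Lemma~\ref{lemma:1}, $F_{1,i}$ is a combination of $L_{j,m}+L_{j,m-1}$ with $m\ge k-2i+1\ge2$ (this is where $l\le k$, hence $i\le\lfloor k/2\rfloor$, is used), so $D_s^{-1}F_{1,i}$ vanishes at $x_{j-\frac12}^+$ and at $x_{j+\frac12}^-$. Scaling by $\bar{h}_j^{2i-1}G_i$ then yields $(\bar{w}_{1,i},v_x)_j=-(w_{1,i},v)_j$. The remaining three are obtained verbatim after inserting \eqref{barF} or \eqref{cf1}, using $G_i'=G_{i+1}$ and $Q_{i-1}'=Q_i$, and invoking the analogous endpoint vanishing of $D_s^{-1}\bar{F}_{1,i}$, $D_s^{-1}F_{2,i}$, $D_s^{-1}\bar{F}_{2,i}$, which follows the same way from \eqref{fir2} and \eqref{P-F1}--\eqref{P-F2}.

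Given these four identities, I would obtain \eqref{cor1}--\eqref{cor4} by expanding $W_1^l,W_2^l$ via \eqref{corr_func}, substituting into the left-hand sides, and regrouping the terms into the four paired forms above. For $l=2r$ even, $\lceil l/2\rceil=\lfloor l/2\rfloor=r$: in $(W^l_{2t},v)_j+(W^l_1,v_x)_j$ the $w_{2,\cdot}$ and $\bar{w}_{2,\cdot}$ terms cancel completely in pairs, while the $\bar{w}_{1,\cdot}$ and $w_{1,\cdot}$ terms telescope and leave exactly $(w_{1,1},v_x)_j+(\bar{w}_{1,rt},v)_j$; the analogous regrouping in $(W^l_2,v_x)_j+(W^l_1,v)_j$ leaves $(w_{2,1},v_x)_j+(\bar{w}_{2,r},v)_j$. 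For $l=2r+1$ odd, $\lceil l/2\rceil=r+1$ is one larger than $\lfloor l/2\rfloor=r$, so the surplus pieces $w_{1,r+1}$ in $W_1^l$ and $w_{2,r+1}$ in $W_2^l$ are exactly what survives the telescoping, producing the right-hand sides of \eqref{cor3} and \eqref{cor4}. Summing the pairwise identities over $i$ then completes the proof.

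The part I expect to be the real (if modest) obstacle is bookkeeping rather than analysis: one must track the $\lceil\cdot\rceil$ versus $\lfloor\cdot\rfloor$ ranges carefully so that the regrouping is a perfect matching leaving precisely two boundary terms in each of \eqref{cor1}--\eqref{cor4}, and one must confirm — using $l\le k$ — that no Legendre mode of index $0$ or $1$ ever enters $F_{1,i},\bar{F}_{1,i},F_{2,i},\bar{F}_{2,i}$, since it is only for $m\ge2$ that $\int_{-1}^1 L_{j,m}=0$ makes the integration-by-parts boundary terms disappear. Beyond Lemma~\ref{lemma:1}, \eqref{barF}, \eqref{cf1} and \eqref{P-F1}--\eqref{P-F2}, no new tool is needed.
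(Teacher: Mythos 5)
Your proposal is correct and follows essentially the same route as the paper: the same four pairwise integration-by-parts cancellations (driven by the endpoint vanishing of $D_s^{-1}F_{1,i}$, $D_s^{-1}F_{2,i}$, $D_s^{-1}\bar{F}_{1,i}$, $D_s^{-1}\bar{F}_{2,i}$ together with \eqref{barF} and \eqref{cf1}), followed by the same telescoping over $i$ with the parity of $l$ determining which two terms survive. The only quibble is in your closing remark: the relevant vanishing $\int_{-1}^{1}L_{j,m}=0$ (equivalently, $D_s^{-1}L_{j,m}$ vanishing at both endpoints) already holds for all $m\ge 1$, not just $m\ge 2$, and indeed the mode $L_{j,1}$ does appear for the extreme index $i=\lfloor k/2\rfloor$ when $k$ is even — but this causes no harm, so the argument stands.
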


  With the correction functions $W^l_1,W^l_2, 1\le l\le k$, we define the special interpolation functions
\begin{equation}\label{interpolation}
  u^l_I=P_h^-u-W^l_2,\ \ \ q^l_I=P_h^+q-W^l_1.
\end{equation}
  By \eqref{corr1}, we have
\begin{equation}\label{interp:1}
   u^l_I(x^-_{j-\frac 12},t)=u(x^-_{j-\frac 12},t),\ \
\ q^l_I(x^+_{j-\frac 12},t)=q(x^+_{j-\frac
   12},t),\ \ \forall j\in\bZ_{N+1}.
\end{equation}

\subsection{Correction functions for the fluxes \eqref{flux2}}
  In this case, we still use the notation
\[
G_i(t)=\tilde{u}^{(i)}_{j,k}(t),\ \ \
   Q_i(t)=\bar{q}^{(i)}_{j,k}(t),\ \ 0\le i\le \lceil k/2\rceil,
\]
where $\tilde{u}_{j,k},\bar{q}_{j,k}$ are defined by
\eqref{coefficient}-\eqref{coefficient_0}.

 Similar as the fluxes choice \eqref{flux1}, we  construct  the correction functions as follows.
 For all $1<l\le k, t\ge 0$, we define, at the boundary points $x=x_{ \frac 12}$ and $x=x_{
N+\frac 12}$,
\begin{equation}\label{corr_boundary1}
   W^l_1(x^-_{ \frac 12},t)=0,\ \ \  W^l_2(x^+_{ N+\frac 12},t)=0,
\end{equation}
  and in each element $\tau_j, j\in\bZ_N$,
\begin{equation}\label{f1}
  W^l_1(x,t)=\sum_{i=1}^{\lfloor l/2\rfloor}\bar{w}_{1,i}+\sum_{i=1}^{\lceil
  l/2\rceil}{w}_{2,i},\ \
   W_2^l(x,t)=\sum_{i=1}^{\lceil l/2\rceil}{w}_{1,i}+\sum_{i=1}^{\lfloor l/2\rfloor}\bar{w}_{2,i},
\end{equation}
   where
\begin{eqnarray*}\label{f2}
    && {w}_{1,i}=\bar{h}_j^{2i-1}Q_{i-1}{F}_{1,i},\ \ \ \bar{w}_{1,i}=\bar{h}_j^{2i}Q_i\bar{F}_{1,i},\\
    \label{f3}
   && {w}_{2,i}=\bar{h}_j^{2i-1}G_i{F}_{2,i}, \ \ \  \ \bar{w}_{2,i}=\bar{h}_j^{2i}G_i\bar{F}_{2,i}.
\end{eqnarray*}
 We define the special interpolation functions in each element $\tau_j, j\in\bZ_N$  as
\begin{equation}\label{interpolation1}
  u^l_I=P_h^+u-W^l_2,\ \ \ q^l_I=P_h^-q-W^l_1,\ \ 1\le l\le k.
\end{equation}
  A direct calculation yields
\begin{equation}\label{interp:2}
   u^l_I(x^+_{j-\frac 12},t)=u(x^+_{j-\frac 12},t),\ \ \ q^l_I(x^-_{j-\frac 12},t)=q(x^-_{j-\frac
   12},t),\ \ \forall j\in\bZ_{N+1}.
\end{equation}

   We end with this section some estimates for $W_1^l,W^l_2$ and the
   interpolation function
   $(u_I,q_I)$, which play important roles in our superconvergence
   analysis.
\begin{theorem}\label{theorem1}
  Let $u\in W^{k+l+2,\infty}(\Omega), 1\le l\le k$ be the solution of
  \eqref{con_laws}. Suppose  $W^l_1,W^l_2$  are defined by
  \eqref{corr_boundary}-\eqref{corr_func} for fluxes \eqref{flux1} or \eqref{corr_boundary1}-\eqref{f1} for fluxes \eqref{flux2}.
  Then for all $j\in\bZ_N$
\begin{equation}\label{estimate:w}
    \|W^l_1\|_{0,\infty,\tau_j} + \|W^l_2\|_{0,\infty,\tau_j}\lesssim h^{k+2}\|u\|_{k+l+2,\infty,\tau_j}.
\end{equation}
  Moreover, if $(u^l_I,q^l_I)\in V_h$ is the corresponding interpolation function defined by
  \eqref{interpolation}  and \eqref{interpolation1} for fluxes
  \eqref{flux1} and  \eqref{flux2}, respectively,
\begin{eqnarray}\label{corr2}
   && \left|(u^l_{It}-u_t,v)_j-(W^l_1,v_x)_j\right|\lesssim h^{k+l+1}\|u\|_{k+2+l,\infty,\tau_j}\|v\|_{0,1,\tau_j},\\
    \label{corr3}
    && \left|(q^l_I-q,v)_j-(W^l_2,v_x)_j\right|\lesssim h^{k+l+1}\|u\|_{k+2+l,\infty,\tau_j}\|v\|_{0,1,\tau_j}.
\end{eqnarray}
\end{theorem}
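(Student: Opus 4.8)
The plan is to derive all three estimates directly from the explicit structure of the correction functions worked out in Lemmas \ref{lemma:1} and \ref{lemma2}, together with the approximation bounds \eqref{appro}--\eqref{appro1}. First I would establish \eqref{estimate:w}. By the defining formulas \eqref{corr_func} (or \eqref{f1} for the fluxes \eqref{flux2}), $W^l_1$ and $W^l_2$ are finite sums of the building blocks $w_{1,i},\bar w_{1,i},w_{2,i},\bar w_{2,i}$ given in \eqref{correctu}--\eqref{correctq}. Each such block is of the form $\bar h_j^{\,m}$ times one of $G_i,G_{i-1},Q_i,Q_{i-1}$ times one of $F_{1,i},\bar F_{1,i},F_{2,i},\bar F_{2,i}$, where the power $m$ is at least $1$ (for the $w$'s) or at least $2$ (for the $\bar w$'s). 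Lemmas \ref{lemma:1} and the estimates \eqref{fp1}--\eqref{fp4} give $\|F_{1,i}\|_{0,\infty,\tau_j},\|\bar F_{1,i}\|_{0,\infty,\tau_j},\dots\lesssim 1$ uniformly, while \eqref{appro}--\eqref{appro1} give $|G_i|,|Q_i|\lesssim h^{k+1}\|u\|_{k+1+2i,\infty,\tau_j}$ (note $Q_i$ involves $\tilde q_{j,k}$, hence $u$ in $W^{k+2+2i,\infty}$). Since $1\le i\le\lceil l/2\rceil$, the largest Sobolev index appearing is $k+2+2\lceil l/2\rceil\le k+l+2$ when $l$ is even, and one checks the odd case likewise does not exceed $k+l+2$; combined with the extra factor $\bar h_j^{\,m}$ with $m\ge 1$, each block is $\lesssim h^{k+2}\|u\|_{k+l+2,\infty,\tau_j}$, and summing the finitely many blocks gives \eqref{estimate:w}.

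Next I would prove \eqref{corr2}. Since $u^l_I=P_h^-u-W^l_2$ (resp.\ $P_h^+u-W^l_2$), we have
$(u^l_{It}-u_t,v)_j = (P_h^\mp u_t - u_t,v)_j - (W^l_{2t},v)_j$.
For the projection part, the error representation \eqref{inter_rep} (differentiated in $t$, which commutes with $P_h^\pm$) shows $(P_h^\mp u_t-u_t,v)_j$ equals the top Legendre coefficient of $u_t-P_h^\mp u_t$ times $(L_{j,k},v)_j$; by \eqref{iter_rep1} this coefficient is exactly $G_1=\bar u_{j,k}'$ (in the \eqref{flux1} case), so $(P_h^- u_t-u_t,v)_j=-G_1(L_{j,k},v)_j$. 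On the other hand the key algebraic identities \eqref{cor1} and \eqref{cor3} of Lemma \ref{lemma2} tell us that $-(W^l_{2t},v)_j-(W^l_1,v_x)_j$ collapses, after the telescoping in the proof of Lemma \ref{lemma2}, to $-(w_{1,1},v_x)_j$ minus a single leftover term: $(\bar w_{1,rt},v)_j$ when $l=2r$ is even, or $(w_{2,r+1,t},v)_j$ when $l=2r+1$ is odd. By the construction \eqref{correctu}, $w_{1,1}=\bar h_j G_1 F_{1,1}$ and $F_{1,1}=-\frac{1}{2k+1}(L_{j,k}+L_{j,k-1})$, so after integrating by parts $(w_{1,1},v_x)_j$ produces a term that cancels the $-G_1(L_{j,k},v)_j$ coming from the projection (this is precisely the matching the paper advertises), leaving only the leftover high-order term. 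That leftover is a single block $\bar w_{1,r}$ or $w_{2,r+1}$ differentiated once in time, i.e.\ it carries a factor $\bar h_j^{\,2r}$ or $\bar h_j^{\,2r+1}$ — that is, $\bar h_j^{\,l}$ in both parities — times $G_{r+1}$ or $Q_{r+1}$, which is $\lesssim h^{k+1}\|u\|_{k+2+l,\infty,\tau_j}$. Hence the whole expression is bounded by $h^{k+l+1}\|u\|_{k+2+l,\infty,\tau_j}$ times $\|F\|_{0,\infty,\tau_j}\|v\|_{0,1,\tau_j}\lesssim\|v\|_{0,1,\tau_j}$, which is \eqref{corr2}. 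The estimate \eqref{corr3} is identical in spirit: write $q^l_I-q=P_h^+q-q-W^l_1$, use \eqref{iter_rep1} to identify the projection error's top coefficient with $Q_0=\tilde q_{j,k}$ and $w_{2,1}=\bar h_j Q_0 F_{2,1}$ with $F_{2,1}=\frac{1}{2k+1}(L_{j,k}-L_{j,k-1})$ to cancel it via \eqref{cor2}/\eqref{cor4}, and bound the single leftover term $\bar w_{2,r}$ (even $l$) or $w_{1,r+1}$ (odd $l$) by $\bar h_j^{\,l}$ times $|Q_r|$ or $|G_{r+1}|\lesssim h^{k+1}\|u\|_{k+2+l,\infty,\tau_j}$.

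The main obstacle is the bookkeeping in the middle step: one must verify that the time-derivative commutes cleanly with $P_h^\pm$ and with all the operators $D_s^{-1}$ entering \eqref{correcti}--\eqref{barF} (it does, since those operators act only in $x$ and the only $t$-dependence sits in the scalar coefficients $G_i,Q_i$), and one must be careful to track \emph{exactly} which leftover term survives the telescoping of Lemma \ref{lemma2} in each parity of $l$ and for each of the two flux choices, so that the surviving term indeed carries the full factor $\bar h_j^{\,l}$ rather than a lower power — this is what upgrades the naive $h^{k+2}$ bound of \eqref{estimate:w} to the sharp $h^{k+l+1}$ bound of \eqref{corr2}--\eqref{corr3}. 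Everything else is a routine combination of the uniform bounds $\|F_{\cdot,i}\|_{0,\infty}\lesssim 1$, the Hölder inequality $|(g,v)_j|\le\|g\|_{0,\infty,\tau_j}\|v\|_{0,1,\tau_j}$, and the approximation estimates \eqref{appro}--\eqref{appro1}.
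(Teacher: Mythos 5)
Your proposal is correct and follows essentially the same route as the paper's proof: bound each building block $w_{1,i},\bar w_{1,i},w_{2,i},\bar w_{2,i}$ via \eqref{fp1}--\eqref{fp4} and \eqref{appro}--\eqref{appro1} to get \eqref{estimate:w}, then convert the projection error $-G_1(L_{j,k},v)_j$ (resp.\ $-Q_0(L_{j,k},v)_j$) into $(w_{1,1},v_x)_j$ (resp.\ $(w_{2,1},v_x)_j$) by integration by parts so that it cancels against the corresponding term in \eqref{cor1}--\eqref{cor4}, leaving exactly the single leftover block $\bar w_{1,rt}$ or $w_{2,r+1,t}$ (and $\bar w_{2,r}$ or $w_{1,r+1}$) carrying the factor $\bar h_j^{\,l}$. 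The bookkeeping you flag (commutation of $\partial_t$ with $P_h^\pm$ and $D_s^{-1}$, and which term survives in each parity) is handled in the paper exactly as you describe.
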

\begin{proof}   We only consider the
  fluxes \eqref{flux1}, since the the proof for fluxes \eqref{flux2}
  is following the same line.
For all $i\ge 1$, as direct consequences of the second inequality of
\eqref{fp1}-\eqref{fp2} and
 \eqref{fp3}-\eqref{fp4}, and \eqref{appro}-\eqref{appro1},
 \begin{eqnarray}\label{e:1}
   &&\|w_{1,i}\|_{0,\infty,\tau_j}\lesssim h^{k+2i}\|u\|_{k+1+2i},\
   \ \|w_{2,i}\|_{0,\infty,\tau_j}\lesssim h^{k+2i}\|u\|_{k+2i}, \\
   \label{e:2}
 &&\|\bar w_{1,i}\|_{0,\infty,\tau_j}\lesssim
   h^{k+2i+1}\|u\|_{k+1+2i}
   \ \ \|\bar w_{2,i}\|_{0,\infty,\tau_j}\lesssim
   h^{k+2i+1}\|u\|_{ k+2+2i}.
  \end{eqnarray}
    Then \eqref{estimate:w} follows.

  We now show \eqref{corr2}-\eqref{corr3}.
  By \eqref{iter_rep1}, integration by parts, and the first formula of
\eqref{correcti}-\eqref{correctii},
\begin{eqnarray*}
    &&(P^-_hu_t-u_t,v)_j=-G_1(L_{j,k},v)_j=\bar{h}_jG_1(F_{1,1},v_x)_j=(w_{1,1},v_x)_j,\\
  &&(P^+_hq-q,v)_j=-Q_0(L_{j,k},v)_j=\bar{h}_jQ_0(F_{2,1},v_x)_j=(w_{2,1},v_x)_j.
\end{eqnarray*}
   Then
\begin{eqnarray*}
   &&(u^l_{It}-u_t,v)-(W_1,v_x)=(w_{1,1},v_x)_j-(W^l_{2t},v)-(W^l_1,v_x),\\
   &&(q^l_I-q,v)-(W_2,v_x)=(w_{2,1},v_x)_j-(W^l_{1},v)-(W^l_2,v_x).
\end{eqnarray*}
   In light of \eqref{cor1}-\eqref{cor4}, we have
\[
   (u^l_{It}-u_t,v)_j-(W^l_1,v_x)_j=(\bar{w}_{1,rt},v)_j,\ \ (q^l_I-q,v)_j-(W^l_2,v_x)_j=(\bar{w}_{2,r},v)_j
\]
  for $l=2r$ and
\[
   (u^l_{It}-u_t,v)_j-(W^l_1,v_x)_j=(w_{2,r+1t},v)_j,\ \ (q^l_I-q,v)_j-(W^l_2,v_x)_j=(w_{1,r+1},v)_j
\]
  for $l=2r+1$. By \eqref{e:1}-\eqref{e:2},  we have for all $l\ge 1$,
\begin{eqnarray*}
  &&|(u^l_{It}-u_t,v)_j-(W^l_1,v_x)_j|\lesssim h^{k+l+1} \|u\|_{k+l+2,\infty,\tau_j}\|v\|_{0,1,\tau_j},\\
  &&|(q^l_I-q,v)_j-(W^l_2,v_x)_j|\lesssim h^{k+l+1} \|u\|_{k+l+2,\infty,\tau_j}\|v\|_{0,1,\tau_j}.
\end{eqnarray*}
  The proof is completed.
\end{proof}

\section{ Superconvergence}

   In this section, we shall study superconvergence properties of the LDG
   solution at some special points : nodes, left and right Radau points,
   and superconvergence for the domain and cell average.  We denote by $R^l_{j,m},R^r_{j,m}, m\in \bZ_{k}$ the
   $k$ interior left and right Radau points in the interval $\tau_j,
   j\in\bZ_N$, respectively.  Namely,
   $R_{j,m}^l, m\in\bZ_k$ are zeros of $L_{j,k+1}+L_{j,k}, j\in\bZ_N$ except the point $x=x_{j-\frac
  12}$, and  $R_{j,m}^r, m\in\bZ_k$  are zeros of $L_{j,k+1}-L_{j,k}$ except the point $x=x_{j+\frac
12}$.

  We begin with a study of the error between the LDG solution
   $(u_h,q_h)$ and the interpolation
  function  $(u^l_I,q^l_I), 1\le l\le k$ defined in  \eqref{interpolation} or
  \eqref{interpolation1}.

\begin{theorem}\label{theo:0}
    Let $u\in W^{k+l+2,\infty}(\Omega),1\le l\le k $  be the solution of
   \eqref{con_laws}, and  $u_h,q_h\in V_h$ the solution of \eqref{LDG_scheme}.
   Let $u^l_I,q^l_I\in V_h$  be defined by \eqref{interpolation}
   for fluxes \eqref{flux1} or \eqref{interpolation1} for fluxes
   \eqref{flux2}. Suppose the initial solution
   $u_h(\cdot,0)=u_I^l(\cdot,0)$.
   Then for both the periodic and mixed  boundary
  conditions,
\begin{eqnarray}\label{spclossness}
   \begin{aligned}
 && \|u^l_I-u_h\|_0(t)
   &\lesssim (1+t)h^{k+l+1}\|u\|_{k+l+2,\infty},\\
  && \|q^l_I-q_h\|_0(t)
   &\lesssim (1+t^{\frac 12})h^{k+l+1}\|u\|_{k+l+2,\infty}.
 \end{aligned}
 \end{eqnarray}
\end{theorem}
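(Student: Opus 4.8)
The plan is to control $\xi:=u^l_I-u_h$ and $\eta:=q^l_I-q_h$ simultaneously by a coupled energy estimate, and then to sharpen the bound on $\eta$ by differentiating the scheme in time. Since $a^1(\cdot,\cdot;v)$ and $a^2(\cdot,\cdot;w)$ are linear in their first two arguments and both $(u_h,q_h)$ (by \eqref{solu1}) and $(u,q)$ (by \eqref{solu2}) annihilate them, subtraction gives, for all $v,w\in V_h$,
\[
a^1(\xi,\eta;v)=a^1(\epsilon_u,\epsilon_q;v),\qquad a^2(\xi,\eta;w)=a^2(\epsilon_u,\epsilon_q;w),
\]
with $\epsilon_u:=u^l_I-u$, $\epsilon_q:=q^l_I-q$. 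Choosing $v=\xi$, $w=\eta$ in \eqref{p1} (for fluxes \eqref{flux1}) or \eqref{p2} (for \eqref{flux2}), the boundary terms $\eta^{\pm}\xi^{\mp}$ at $x_{1/2}$ and $x_{N+1/2}$ cancel: for periodic data by periodicity of $\xi,\eta$, and for the mixed data because the numerical fluxes there reproduce the prescribed boundary values while $u^l_I$ matches $u$ as in \eqref{interp:1}/\eqref{interp:2}. Hence
\[
\tfrac12\tfrac{d}{dt}\|\xi\|_0^2+\|\eta\|_0^2=a^1(\epsilon_u,\epsilon_q;\xi)+a^2(\epsilon_u,\epsilon_q;\eta).
\]

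Next I would evaluate the right side element by element. On each $\tau_j$, the Gauss--Radau orthogonality $(P^{\pm}_hv-v)\perp\bP^{k-1}(\tau_j)$ forces $(\epsilon_q,\xi_x)_j=-(W^l_1,\xi_x)_j$ and $(\epsilon_u,\eta_x)_j=-(W^l_2,\eta_x)_j$, while the endpoint matching of $P^{\pm}_h$ together with $W^l_1(x^+_{j-1/2},t)=W^l_2(x^-_{j-1/2},t)=0$ from \eqref{corr1} makes every flux term vanish. What remains is exactly
\[
a^1_j(\epsilon_u,\epsilon_q;\xi)=(u^l_{It}-u_t,\xi)_j-(W^l_1,\xi_x)_j,\qquad
a^2_j(\epsilon_u,\epsilon_q;\eta)=(q^l_I-q,\eta)_j-(W^l_2,\eta_x)_j,
\]
i.e.\ precisely the quantities bounded in Theorem \ref{theorem1}. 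Invoking \eqref{corr2}--\eqref{corr3}, summing over $j$ and using $\sum_j\|v\|_{0,1,\tau_j}=\|v\|_{0,1,\Omega}\lesssim\|v\|_0$, I get, with $\rho:=h^{k+l+1}\|u\|_{k+l+2,\infty}$,
\[
\tfrac12\tfrac{d}{dt}\|\xi\|_0^2+\|\eta\|_0^2\lesssim\rho\,\big(\|\xi\|_0+\|\eta\|_0\big).
\]

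For the $\xi$-bound I would absorb $\rho\|\eta\|_0\le\tfrac12\|\eta\|_0^2+\tfrac12\rho^2$ to obtain $\frac{d}{dt}\|\xi\|_0^2\lesssim\rho\|\xi\|_0+\rho^2$, and since $\|\xi\|_0(0)=0$ by the initial discretization, apply this to $Y:=\|\xi\|_0^2+\rho^2$: then $Y'\lesssim\rho\sqrt Y$, so $\frac{d}{dt}\sqrt Y\lesssim\rho$ and $\|\xi\|_0(t)\le\sqrt{Y(t)}\lesssim(1+t)\rho$. For the $\eta$-bound I would first observe $\|\eta\|_0(0)\lesssim\rho$: at $t=0$ we have $\xi=0$, so $(\eta(\cdot,0),w)_j=a^2_j(\epsilon_u,\epsilon_q;w)|_{t=0}$ for every $w\in V_h$; taking $w=\eta(\cdot,0)$ on $\tau_j$, using Theorem \ref{theorem1}, $\|\eta\|_{0,1,\tau_j}\lesssim h^{1/2}\|\eta\|_{0,\tau_j}$ and summation gives $\|\eta\|_0(0)\lesssim\rho$. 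Then, because test functions are $t$-independent, $(u_{ht},q_{ht})$ and $(u_t,q_t)$ again annihilate $a^1,a^2$, so $a^1(\xi_t,\eta_t;v)=a^1(\partial_t\epsilon_u,\partial_t\epsilon_q;v)$ and similarly for $a^2$. Pairing the first error equation with $v=\xi_t$ and the $t$-differentiated second error equation with $w=\eta$ and adding, the alternating-flux cross terms cancel exactly as in the derivation of \eqref{p1}/\eqref{p2}, leaving $\|\xi_t\|_0^2+\frac12\frac{d}{dt}\|\eta\|_0^2$ equal to a sum of inner products of $\eta$ and $\xi_t$ against $t$-derivatives of the correction residuals, which are again $O(h^{k+l+1}\|u\|_{k+l+2,\infty})$ in $L^\infty$ (the extra time derivative being harmless since $u_t=u_{xx}$ trades a time derivative for two spatial ones). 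Absorbing $\|\xi_t\|_0$, integrating from $0$, and inserting $\|\eta\|_0(0)\lesssim\rho$ and $\|\xi\|_0(t)\lesssim(1+t)\rho$, a short bootstrap in $\sup_{s\le t}\|\eta\|_0(s)$ produces $\|\eta\|_0(t)\lesssim(1+t^{1/2})\rho$.

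The main obstacle is this last step: the plain energy identity only governs $\frac12\frac{d}{dt}\|\xi\|_0^2+\|\eta\|_0^2$, in which the $\xi$-derivative term has no sign, so a genuinely pointwise-in-$t$ bound for $\eta$ cannot be read off directly; the time-differentiated estimate, where $\|\xi_t\|_0^2$ now appears with the correct sign, is essential, and squeezing out the exponent $1/2$ in place of $1$ while keeping the regularity demand at $W^{k+l+2,\infty}(\Omega)$ is the delicate part. A secondary, purely mechanical, point is the bookkeeping in the second paragraph showing that all flux, jump and projection-residual contributions cancel, leaving only the quantities already estimated in Theorem \ref{theorem1}.
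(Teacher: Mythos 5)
Your treatment of the first estimate is essentially the paper's own proof: the error equations $a^i(\xi,\eta;\cdot)=a^i(\epsilon_u,\epsilon_q;\cdot)$, the cancellation of the boundary terms in \eqref{p1}/\eqref{p2} for both the periodic and mixed conditions, the reduction of the right-hand side to the residuals \eqref{corr2}--\eqref{corr3} of Theorem \ref{theorem1}, and the Gronwall step giving $\|\xi\|_0(t)\lesssim(1+t)\rho$ with $\rho=h^{k+l+1}\|u\|_{k+l+2,\infty}$ all coincide with what the authors do; your device $Y=\|\xi\|_0^2+\rho^2$ is in fact a cleaner way to integrate \eqref{iii1}, and the element-by-element bookkeeping in your second paragraph is exactly what the paper compresses into the identity $a^1(\eta_u,\eta_q;v)=(u^l_{It}-u_t,v)-(W^l_1,v_x)$.

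For the second estimate you depart from the paper, and this is where the proposal does not close. The paper never differentiates the scheme in time: it returns to the same energy identity $(\xi_t,\xi)+\|\eta\|_0^2=a^1(\epsilon_u,\epsilon_q;\xi)+a^2(\epsilon_u,\epsilon_q;\eta)$ at the fixed time $t$, extracts $\|\eta\|_0^2\lesssim\rho\|\xi\|_0+\rho\|\eta\|_0$, and solves this quadratic inequality using the already established $\|\xi\|_0\lesssim(1+t)\rho$; that is precisely where the exponent $1/2$ comes from, via $\|\eta\|_0\lesssim\rho+\bigl(\rho\cdot(1+t)\rho\bigr)^{1/2}$. Your time-differentiation route has two concrete defects. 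First, bounding $a^2(\partial_t\epsilon_u,\partial_t\epsilon_q;\eta)$ requires Theorem \ref{theorem1} applied to $u_t$, and since $u_t=u_{xx}$ this costs two extra spatial derivatives, i.e.\ $u\in W^{k+l+4,\infty}$ rather than the stated hypothesis $W^{k+l+2,\infty}$; you flag this as ``the delicate part'' but do not repair it. Second, even granting the extra regularity, integrating
\begin{equation*}
\tfrac12\tfrac{d}{dt}\|\eta\|_0^2\lesssim\rho^2+\tilde\rho\,\|\eta\|_0,
\qquad \tilde\rho:=h^{k+l+1}\|u_t\|_{k+l+2,\infty},
\end{equation*}
from $0$ to $t$ yields only $\sup_{s\le t}\|\eta\|_0(s)\lesssim(1+t^{1/2})\rho+t\tilde\rho$, i.e.\ linear growth in $t$: the term $\tilde\rho\|\eta\|_0$ cannot be absorbed, and the ``short bootstrap'' you invoke is not spelled out. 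Feeding $\|\xi\|_0\lesssim(1+t)\rho$ back into the undifferentiated identity would help only if you also had a \emph{pointwise-in-$t$} bound on $\|\xi_t\|_0$, whereas your differentiated energy estimate controls $\|\xi_t\|_0$ only in $L^2(0,t)$ unless you differentiate yet again. So the bound $\|q^l_I-q_h\|_0(t)\lesssim(1+t^{1/2})h^{k+l+1}\|u\|_{k+l+2,\infty}$ is not established by the proposal; the missing idea is simply to exploit the undifferentiated energy identity a second time, as the paper does.
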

\begin{proof} Let $\eta_u=u^l_I-u_h, \eta_q=q^l_I-q_h$. Recall the definition
of $a^1(\cdot,\cdot;\cdot),a^2(\cdot,\cdot;\cdot)$ and
\eqref{solu1}-\eqref{solu2}, we have for all $v,w\in V_h$,
\begin{eqnarray*}
   &&a^1(\eta_u,\eta_q;v)=(u^l_{It}-u_t,v)-(W^l_1,v_x),\\
  &&
  a^2(\eta_u,\eta_q;w)=(q^l_I-q,w)-(W^l_2,w_x).
\end{eqnarray*}
  By Theorem \ref{theorem1},
  the inequalities \eqref{corr2}-\eqref{corr3} hold for both the fluxes \eqref{flux1} and
 \eqref{flux2}, then
\begin{eqnarray*}
   &&|a^1(\eta_u,\eta_q;v)|\lesssim h^{k+l+1}\|u\|_{k+l+2,\infty}\|v\|_{0,1},\\
  &&
  |a^2(\eta_u,\eta_q;w)|\lesssim h^{k+l+1}\|u\|_{k+l+2,\infty}\|w\|_{0,1}.
\end{eqnarray*}
   We now show \eqref{spclossness}. We first consider
   the periodic boundary condition.
   Since
\begin{eqnarray*}
   &&(u^l_I-u_h)^-_{N+\frac 12}=(u^l_I-u_h)^-_{\frac 12},\ \ \  (u^l_I-u_h)^+_{N+\frac 12}=(u^l_I-u_h)^+_{\frac
   12},\\
   &&(q^l_I-q_h)^+_{N+\frac 12}=(q^l_I-q_h)^+_{\frac 12},\ \ \ (q^l_I-q_h)^-_{N+\frac 12}=(q^l_I-q_h)^-_{\frac
   12},
\end{eqnarray*}
 by choosing  $v=\eta_u,
   w=\eta_q$ in \eqref{p1} for fluxes \eqref{flux1}, or in \eqref{p2} for fluxes\eqref{flux2}, we obtain
   for both fluxes choice
\begin{eqnarray*}
  (\eta_{ut},\eta_u)+(\eta_q,\eta_q)&=& a^1(\eta_u,\eta_q;\eta_u)+a^2(\eta_u,\eta_q;\eta_q)\\
          &\lesssim
          &h^{k+l+1}\|u\|_{k+l+2,\infty}(\|\eta_u\|_{0,1}+\|\eta_q\|_{0,1}).
\end{eqnarray*}
   By Cauchy-Schwarz inequality, we get
\begin{equation}\label{iii1}
  \frac{1}{2} \frac {d}{dt}\|\eta_u\|^2_0=(\eta_{ut},\eta_u)\lesssim
  h^{k+l+1}\|u\|_{k+l+2,\infty}(\|\eta_u\|_{0}+h^{k+l+1}\|u\|_{k+l+2,\infty}).
\end{equation}
  Due to the special choice of initial condition, we have $\|\eta_u\|_0(0)=0$, which
  yields
\[
   \|\eta_u\|^2_0(t)=\int_{0}^t\frac{d}{dt}\|\eta_u\|^2_0 dt\lesssim
   th^{k+l+1}\|u\|_{k+l+2,\infty}(\|\eta_u\|_{0}(t)+h^{k+l+1}\|u\|_{k+l+2,\infty}).
\]
   Then the first inequality of \eqref{spclossness} follows from a
   direct calculation.
 Note that
\begin{eqnarray*}
  \|\eta_q\|_0^2 \lesssim
 h^{k+l+1}\|u\|_{k+l+2,\infty}\|\eta_u\|_0+h^{k+l+1}\|u\|_{k+l+2,\infty}\|\eta_q\|_0,
\end{eqnarray*}
  we obtain
\[
   \|\eta_q\|_0\lesssim (1+t^{\frac 12})h^{k+l+1}\|u\|_{k+l+2,\infty}.
\]
    This finishes the second inequality of \eqref{spclossness} for the periodic boundary
   condition.

   Now we consider the mixed  boundary condition. Noticing that
\[
   (u^l_I-u_h)^-_{\frac 12}=0,\ \ \ (q^l_I-q_h)^+_{N+\frac 12}=0
\]
  for the condition $u(0,t)=g_0(t), u_x(2\pi,t)=g_1(t)$ and
\[
   (q^l_I-q_h)^-_{\frac 12}=0,\ \ \ (u^l_I-u_h)^+_{N+\frac 12}=0
\]
   for the condition $u_x(0,t)=g_0(t), u(2\pi,t)=g_1(t)$,  by choosing  $v=\eta_u,
   w=\eta_q$ in \eqref{p1} and \eqref{p2}, respectively, we derive in
   both cases
\[
   (\eta_{ut},\eta_u)+(\eta_q,\eta_q)= a^1(\eta_u,\eta_q;\eta_u)+a^2(\eta_u,\eta_q;\eta_q).
\]
  Following the same line as in the periodic case, we obtain
  \eqref{spclossness} directly for the mixed  boundary condition.
\end{proof}

\begin{remark}
   By choosing $l=k$ in Theorem \ref{theo:0},  the special interpolation
   function $(u^k_I,q^k_I)$ is superclose to the LDG solution
   $(u_h,q_h)$, with a superconvergence rate $2k+1$. It is the
   supercloseness that leads to the $2k+1$ superconvergence rate at
   nodes as well as the domain average.
\end{remark}

As direct consequences of \eqref{spclossness} and the estimates for
the correction functions $W^l_1,W^l_2$ in \eqref{estimate:w}, we
have the following superconvergence results for the Gauss-Radau
projections of the exact solution.

\begin{corollary}\label{coro:1}
 Let $u\in W^{k+4,\infty}(\Omega)$  be the solution of
   \eqref{con_laws} and  $u_h,q_h\in V_h$ the solution of \eqref{LDG_scheme}, respectively.
     Suppose the initial solution
   $u_h(\cdot,0)=u_I^l(\cdot,0), l=2$ with $u_I^l$ defined by \eqref{interpolation}
   for fluxes \eqref{flux1}, or \eqref{interpolation1} for fluxes
   \eqref{flux2}.
   Then for both the periodic and mixed  boundary
  conditions,
\begin{equation}\label{xi:1}
   \|\xi_u\|_0\lesssim (1+th) h^{k+2}\|u\|_{k+4,\infty},\ \ \|\xi_q\|_0\lesssim
   (1+t^{\frac 12}h)h^{k+2}\|u\|_{k+4,\infty},
\end{equation}
\end{corollary}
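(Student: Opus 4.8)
The plan is to derive the estimates in \eqref{xi:1} by combining the supercloseness result of Theorem \ref{theo:0} with the size estimate \eqref{estimate:w} for the correction functions, via the triangle inequality. First I would fix $l=2$ and recall the definition of the interpolation functions: for fluxes \eqref{flux1}, $u_I^2=P_h^-u-W_2^2$ and $q_I^2=P_h^+q-W_1^2$ (and analogously $u_I^2=P_h^+u-W_2^2$, $q_I^2=P_h^-q-W_1^2$ for fluxes \eqref{flux2}). Here $\xi_u$ and $\xi_q$ should denote the errors between the LDG solution and the Gauss-Radau projection of the exact solution, i.e. $\xi_u=P_h^-u-u_h$ (resp.\ $P_h^+u-u_h$) and $\xi_q=P_h^+q-q_h$ (resp.\ $P_h^-q-q_h$). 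The elementary but essential observation is the algebraic identity
\[
  \xi_u=(u_I^2-u_h)+W_2^2,\qquad \xi_q=(q_I^2-q_h)+W_1^2,
\]
which holds on every element $\tau_j$ by the very definition of $u_I^2,q_I^2$.

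Next I would estimate each of the two pieces. For the first piece, I would invoke Theorem \ref{theo:0} with $l=2$ (which applies since $u\in W^{k+4,\infty}(\Omega)=W^{k+l+2,\infty}(\Omega)$ and the hypothesis $u_h(\cdot,0)=u_I^2(\cdot,0)$ is exactly the assumed initial discretization), giving
\[
  \|u_I^2-u_h\|_0(t)\lesssim (1+t)h^{k+3}\|u\|_{k+4,\infty},\qquad
  \|q_I^2-q_h\|_0(t)\lesssim (1+t^{1/2})h^{k+3}\|u\|_{k+4,\infty}.
\]
For the second piece, I would sum the local bound \eqref{estimate:w} (with $l=2$) over all elements $\tau_j$: since the mesh is quasi-uniform with $N\sim h^{-1}$ and each local $L^2$ norm is controlled by $h^{1/2}$ times the local $L^\infty$ norm, one gets $\|W_1^2\|_0+\|W_2^2\|_0\lesssim h^{k+2}\|u\|_{k+4,\infty}$ (the $L^\infty$-to-$L^2$ passage costs nothing in the power of $h$ because $\sum_j h_j\|W\|_{0,\infty,\tau_j}^2\lesssim h^{2k+4}\|u\|_{k+4,\infty}^2\sum_j h_j\lesssim h^{2k+4}\|u\|_{k+4,\infty}^2$). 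Adding the two contributions via the triangle inequality yields
\[
  \|\xi_u\|_0\lesssim (1+t)h^{k+3}\|u\|_{k+4,\infty}+h^{k+2}\|u\|_{k+4,\infty}\lesssim (1+th)h^{k+2}\|u\|_{k+4,\infty},
\]
and likewise $\|\xi_q\|_0\lesssim (1+t^{1/2}h)h^{k+2}\|u\|_{k+4,\infty}$, which is exactly \eqref{xi:1}. The same argument works verbatim for the mixed boundary conditions, since Theorem \ref{theo:0} and \eqref{estimate:w} both cover that case.

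This proof is essentially bookkeeping, so there is no deep obstacle; the one point that requires a little care is making sure the powers of $h$ are tracked correctly when passing from the supercloseness rate $h^{k+l+1}=h^{k+3}$ to the projection-error rate $h^{k+2}$. The key is that the dominant term at large $t$ comes from the supercloseness estimate, which carries an extra power of $h$ relative to $\|W\|_0$, so that factoring out $h^{k+2}$ converts the $(1+t)h^{k+3}$ term into $(1+th)h^{k+2}$ — this is precisely the mechanism producing the $th$ (respectively $t^{1/2}h$) dependence in \eqref{xi:1}, and it is worth stating explicitly so the reader sees where the improved time-scaling comes from. One should also note that $\|W_1^2\|_0,\|W_2^2\|_0$ are time-independent in their $h$-power (the $t$-dependence of the exact solution is absorbed into $\|u\|_{k+4,\infty}$, which we treat as a fixed constant over the time interval), so these terms contribute only the clean $h^{k+2}$ with no growth in $t$.
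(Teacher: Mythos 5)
Your proof is correct and is exactly the argument the paper intends: the corollary is stated there as a ``direct consequence'' of the supercloseness estimate \eqref{spclossness} with $l=2$ and the correction-function bound \eqref{estimate:w}, combined by the triangle inequality through the identities $\xi_u=(u_I^2-u_h)+W_2^2$ and $\xi_q=(q_I^2-q_h)+W_1^2$. Your bookkeeping of the powers of $h$ (absorbing $(1+t)h^{k+3}$ into $(1+th)h^{k+2}$) and the local-to-global $L^\infty$-to-$L^2$ passage are both accurate, so nothing further is needed.
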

  where $\xi_u=P_h^-u-u_h, \xi_q=P_h^+q-q_h$ for fluxes
  \eqref{flux1} and $\xi_u=P_h^+u-u_h, \xi_q=P_h^-q-q_h$ for fluxes
  \eqref{flux2}.

\subsection{Superconvergence of the numerical fluxes at nodal points}

We are now ready to present our superconvergence results of the
numerical fluxes at nodes.
\begin{theorem}\label{theo:1}
    Let $u\in W^{2k+2,\infty}(\Omega)$  be the solution of
   \eqref{con_laws}, and $u_h, q_h$  the solution of
   \eqref{LDG_scheme}.
    Suppose the initial solution  $u_h(\cdot,0)=u^k_I(\cdot,0)$ with $u^k_I(\cdot,0)$ defined
    by \eqref{interpolation} for fluxes \eqref{flux1}, or  \eqref{interpolation1} for fluxes \eqref{flux2}. Then
    for both the periodic and mixed  boundary conditions,
\begin{eqnarray}\label{super_node2}
  && e_{u,n}\lesssim (1+t)h^{2k+\frac
   12}\|u\|_{2k+2,\infty},\ \ e_{q,n}\lesssim (1+ t^{\frac 12})h^{2k+\frac
   12}\|u\|_{2k+2,\infty},\\
  \label{super_node3}
  &&\|e_u\|_*\lesssim (1+t)h^{2k+1}\|u\|_{2k+2,\infty},\ \ \|e_q\|_*\lesssim
  (1+ t^{\frac 12})h^{2k+1}\|u\|_{2k+2,\infty},
\end{eqnarray}
  where
\begin{eqnarray*}
   &&e_{u,n}=\max_{j\in\bZ_{N+1}}\left|(u-\hat{u}_h)(x_{j-\frac 12},t)\right|,
\;\; \|e_u\|_{*}=\left(\frac
1{N+1}\sum_{j=1}^{N+1}\big(u-\hat{u}_h\big)^2\big(x_{j-\frac
12},t\big)\right)^{\frac
 12},\\
 &&e_{q,n}=\max_{j\in\bZ_{N+1}}\left|(q-\hat{q}_h)(x_{j-\frac 12},t)\right|,
\;\; \|e_q\|_{*}=\left(\frac
1{N+1}\sum_{j=1}^{N+1}\big(q-\hat{q}_h\big)^2\big(x_{j-\frac
12},t\big)\right)^{\frac
 12},
\end{eqnarray*}
  with the numerical fluxes $\hat{u}_h,\hat{q}_h$ taken as
  \eqref{flux1} or \eqref{flux2}.
\end{theorem}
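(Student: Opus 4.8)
The plan is to bootstrap from the supercloseness result in Theorem \ref{theo:0} with $l=k$, which gives $\|u^k_I-u_h\|_0 + \|q^k_I-q_h\|_0 \lesssim (1+t)h^{2k+1}\|u\|_{2k+2,\infty}$. The key observation is that at the node $x_{j-\frac12}$, the numerical flux error $(u-\hat u_h)$ and $(q-\hat q_h)$ can be split as $(u - \hat u_h) = (u - \hat u^k_I) + (\hat u^k_I - \hat u_h)$, where $\hat u^k_I$ denotes the appropriate one-sided trace of the interpolation function. By \eqref{interp:1} (resp.\ \eqref{interp:2}), the trace of $u^k_I$ that enters the flux matches the exact trace of $u$ exactly, so the first term $u - \hat u^k_I$ vanishes at \emph{every} mesh node; similarly for $q$. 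Thus the entire flux error at nodes reduces to $\hat u^k_I - \hat u_h = \pm(u^k_I - u_h)^{\mp}$ and $\hat q^k_I - \hat q_h = \pm(q^k_I - q_h)^{\pm}$, i.e.\ to one-sided traces of the supercloseness functions $\eta_u = u^k_I-u_h$, $\eta_q = q^k_I-q_h$, which lie in $V_h$.

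The second step is to convert the $L^2$ bound on $\eta_u,\eta_q$ into pointwise and discrete-$\ell^2$ nodal bounds. For a piecewise polynomial $v\in V_h$ of degree $k$ on a quasi-uniform mesh, the standard inverse/trace inequality gives $|v^{\pm}(x_{j-\frac12})| \lesssim h_j^{-1/2}\|v\|_{0,\tau}$ for the adjacent cell $\tau$, hence $\max_j |v^{\pm}(x_{j-\frac12})| \lesssim h^{-1/2}\|v\|_0$, which yields the loss of half a power and explains the $h^{2k+1/2}$ in \eqref{super_node2}. For the averaged norm $\|\cdot\|_*$, a better estimate holds: summing the local trace inequalities squared, $\frac{1}{N+1}\sum_j |v^{\pm}(x_{j-\frac12})|^2 \lesssim \frac{1}{N+1}\sum_j h_j^{-1}\|v\|^2_{0,\tau_j} \sim \frac{h^{-1}}{N+1}\|v\|_0^2 \sim \|v\|_0^2$ since $N+1 \sim h^{-1}$; this recovers the full rate $h^{2k+1}$ in \eqref{super_node3}. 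Applying these two inequalities to $v=\eta_u$ and $v=\eta_q$ and invoking the bounds from Theorem \ref{theo:0} — which carry the factors $(1+t)$ for the $u$-component and $(1+t^{1/2})$ for the $q$-component — delivers all four estimates.

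Finally I would remark that the argument is uniform in the choice of fluxes and boundary conditions, because Theorem \ref{theo:0} already covers both \eqref{flux1} and \eqref{flux2} under periodic and mixed boundary conditions, and the trace-matching identities \eqref{interp:1} and \eqref{interp:2} are precisely the versions tailored to \eqref{flux1} and \eqref{flux2}, respectively. The main obstacle — really the only nontrivial point — is being careful about \emph{which} one-sided trace appears in each flux: for \eqref{flux1} the flux uses $u_h^-$ and $q_h^+$, matching the traces $u^k_I(x^-_{j-1/2})=u(x^-_{j-1/2})$ and $q^k_I(x^+_{j-1/2})=q(x^+_{j-1/2})$ from \eqref{interp:1} at the correct node index (note the flux at $x_{j+1/2}$ from cell $\tau_j$ equals the flux at $x_{(j+1)-1/2}$, so the identity must be read at the right endpoint), and analogously for \eqref{flux2} via \eqref{interp:2}. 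Once the bookkeeping of sides and node indices is set up correctly, the rest is the routine trace/inverse estimate and substitution described above.
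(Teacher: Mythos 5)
Your proposal is correct and follows essentially the same route as the paper: the flux error at each node reduces, via the trace-matching identities \eqref{interp:1}--\eqref{interp:2}, to a one-sided trace of $u^k_I-u_h$ or $q^k_I-q_h$, which is then bounded by the inverse inequality (losing $h^{-1/2}$ for the maximum norm, and losing nothing after summing for the averaged norm under quasi-uniformity) combined with the supercloseness estimate \eqref{spclossness} at $l=k$. No substantive differences from the paper's argument.
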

\begin{proof}
   Let $(u_I,q_I)=(u^k_I,q^k_I)$. By \eqref{interp:1} and
   \eqref{interp:2},
\[
   u(x_{j-\frac 12},t)=\hat{u}_I(x_{j-\frac 12},t), \ \ q(x_{j-\frac 12},t)=\hat{q}_I(x_{j-\frac
   12},t),\ \ j\in\bZ_{N+1}.
\]
  For any fixed $t$, $u_I-u_h\in \bP_k$ in each $\tau_j,j\in \bZ_N$. Then the inverse inequality holds and thus,
\begin{eqnarray}\label{e5}
   \left|(\hat{u}_I-\hat{u}_h)(x_{j+\frac
   12},t)\right|&\le &\|u_I-u_h\|_{0,\infty,\Omega_j}(t)\lesssim h^{-\frac 12}\|u_I-u_h\|_{0,\Omega_j}(t),\\
   \label{e6}
 \left|(\hat{q}_I-\hat{q}_h)(x_{j+\frac
   12},t)\right|&\le &\|q_I-q_h\|_{0,\infty,\Omega_j}(t)\lesssim h^{-\frac
   12}\|q_I-q_h\|_{0,\Omega_j}(t).
\end{eqnarray}
  Here $\Omega_j=\tau_j\cup\tau_{j+1}, j\in\bZ_{N-1}$ and
  $\Omega_j=\tau_1\cup\tau_N, j=0,N$. By \eqref{spclossness},
  the desired result \eqref{super_node2} follows.

We next show \eqref{super_node3}. Again by the inverse inequality,
\begin{eqnarray*}
\frac {1}{N}\sum_{j=1}^{N}\|v\|^2_{0,\infty,\tau_j}\lesssim \frac
{1}{N}\sum_{j=1}^{N} h_j^{-1}\|v\|^2_{0,\tau_j}
 \lesssim \|v\|_0^2,\ \ \forall v\in V_h.
\end{eqnarray*}
 Then
\begin{eqnarray*}
    \frac 1{N+1}\sum_{j=1}^{N+1}\big(\hat{u}_I-\hat{u}_h\big)^2\big(x_{j-\frac
    12},t\big)&\lesssim& \|u_I-u_h\|_0^2(t),\\
   \frac 1{N+1}\sum_{j=1}^{N+1}\big(\hat{q}_I-\hat{q}_h\big)^2\big(x_{j-\frac
    12},t\big)&\lesssim& \|q_I-q_h\|_0^2(t).
\end{eqnarray*}
  The inequality \eqref{super_node3} follows directly from the
  estimate \eqref{spclossness}.
\end{proof}

\subsection{ Superconvegence for the domain and cell averages}
  We first denote by $\|e_u\|_d$ and $\|e_u\|_c$ the domain average
  and the cell average of $u-u_h$, respectively. Precisely,
\begin{eqnarray*}
   &&\|e_u\|_d=
   \left|\frac{1}{2\pi}\int_{0}^{2\pi}(u-u_h)(x,t)dx\right|,\\
  &&\|e_u\|_c=\left( \frac{1}{N}\sum_{j=1}^N\Big(\frac{1}{h_j}\int_{x_{j-\frac 12}}^{x_{j+\frac 12}}(u-u_h)(x,t)dx\Big)^2\right)^{\frac
  12}.
\end{eqnarray*}
  Similarly, the domain average $\|e_q\|_d$ and the cell average
  $\|e_q\|_c$ of  $q-q_h$ can be defined as the same way.

 We have the following superconvergence results for the domain and
 cell averages.
\begin{theorem}\label{theorem3}
     Suppose all the conditions of Theorem \ref{theo:1} hold. Then
\begin{equation}\label{cell-average}
     \|e_u\|_c\lesssim (h+t^{\frac 32}+t) h^{2k}\|u\|_{2k+2,\infty},\ \
   \|e_q\|_c\lesssim (1+t)h^{2k}\|u\|_{2k+2,\infty}.
\end{equation}
  In addition, there hold, for the periodic boundary condition
\begin{equation}\label{cell-average1}
  \|e_u\|_d\lesssim  h^{2k+1}\|u\|_{2k+2,\infty},\ \ \|e_q\|_d=0,
\end{equation}
  and for the mixed  boundary condition
\begin{equation}\label{cell-average3}
   \|e_u\|_d\lesssim
   (h^{\frac 12}+t^{\frac 32}+t)h^{2k+\frac 12}\|u\|_{2k+2,\infty},\ \
   \|e_q\|_d\lesssim
   (1+t)h^{2k+\frac 12}\|u\|_{2k+2,\infty}.
\end{equation}

\end{theorem}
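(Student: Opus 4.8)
The starting point is the supercloseness estimate \eqref{spclossness} with $l=k$, which gives $\|u^k_I-u_h\|_0 + \|q^k_I-q_h\|_0 \lesssim (1+t)h^{2k+1}\|u\|_{2k+2,\infty}$, together with the estimate \eqref{estimate:w} for the correction functions, which controls $\|W_1^k\|_{0,\infty,\tau_j}+\|W_2^k\|_{0,\infty,\tau_j}\lesssim h^{k+2}\|u\|_{2k+2,\infty}$, and hence in the $L^2$-norm $\|W_1^k\|_0+\|W_2^k\|_0\lesssim h^{k+2}\|u\|_{2k+2,\infty}$. Writing $u-u_h = (u-P_h^\pm u) + W_2^k + (u^k_I-u_h)$ (with the sign convention matching the chosen flux), and similarly for $q$, the key observation is that the cell average of the Gauss–Radau projection error $u-P_h^\pm u$ vanishes: indeed from the Legendre expansion \eqref{expansion}, $u-P_h^\pm u$ has no $L_{j,0}$ component, so $\int_{\tau_j}(u-P_h^\pm u)\,dx = 0$. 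Thus the cell average of $u-u_h$ equals the cell average of $W_2^k + (u^k_I-u_h)$ only (and likewise for $q$), which is already $O(h^{k+2})$ for the correction part but only $O(h^{2k+1})$ for the supercloseness part after accounting for the $h^{-1/2}$ loss when converting an $L^2$ estimate to an average. Actually, for the cell-average norms $\|e_u\|_c,\|e_q\|_c$, I would bound each cell average by $\frac{1}{h_j}\|\cdot\|_{0,1,\tau_j}\le h_j^{-1/2}\|\cdot\|_{0,\tau_j}$, so the discrete $\ell^2$ norm over cells of the averages is $\lesssim h^{-1/2}(\|W_2^k\|_0 + \|u^k_I - u_h\|_0)$; combining the two contributions gives the $(h+t^{3/2}+t)h^{2k}$ bound claimed in \eqref{cell-average}, and similarly for $q$ using the $(1+t^{1/2})$ time factor.

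For the domain average \eqref{cell-average1}–\eqref{cell-average3}, I would again use $\int_\Omega(u-u_h)\,dx = \int_\Omega(W_2^k)\,dx + \int_\Omega(u^k_I-u_h)\,dx$. The crucial point is to show that $\int_\Omega(u^k_I - u_h)\,dx$ is actually much smaller than $\|u^k_I-u_h\|_0$ would suggest, by testing the error equations. Recall from the proof of Theorem \ref{theo:0} that $\eta_u = u^k_I-u_h,\ \eta_q=q^k_I-q_h$ satisfy $a^1(\eta_u,\eta_q;v) = (u^k_{It}-u_t,v)-(W_1^k,v_x)$ and $a^2(\eta_u,\eta_q;w)=(q^k_I-q,w)-(W_2^k,w_x)$ for all $v,w\in V_h$. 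Choosing $v\equiv 1$ (a legitimate test function in $V_h$), all the spatial-derivative and flux terms in $a^1$ telescope: $\sum_j(\eta_{q},v_x)_j = 0$, and the boundary/flux terms collapse to a single difference $\hat\eta_q(x_{N+1/2}) - \hat\eta_q(x_{1/2})$, which vanishes in the periodic case and is controlled by the vanishing-trace conditions (\eqref{interp:1}/\eqref{interp:2} give $\eta_q$ or $\eta_u$ zero trace at the appropriate endpoint) in the mixed case. This leaves $\frac{d}{dt}\int_\Omega\eta_u\,dx = -(W_1^k, (1)_x) + \text{(boundary)} = 0 + \text{(boundary)}$, so $\int_\Omega\eta_u\,dx$ is either constant in time (periodic) — hence zero by the initial condition $u_h(\cdot,0)=u^k_I(\cdot,0)$ — or its time derivative is bounded by the boundary term, which after integration in $t$ contributes the $t$-dependent factor in \eqref{cell-average3}. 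The $\|e_q\|_d=0$ statement in the periodic case follows even more directly: choosing $w\equiv 1$ in $a^2(\eta_u,\eta_q;1)=(q^k_I-q,1)-(W_2^k,(1)_x)$, the right side is $\int_\Omega(q-q_h^{\text{proj error}}) = 0$ and $(W_2^k,0)=0$, while the left side reduces to $\int_\Omega\eta_q\,dx$ up to telescoping flux terms that vanish periodically; combined with $\int_\Omega(q-P_h^\pm q)\,dx=0$ this forces $\int_\Omega(q-q_h)\,dx=0$.

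The main obstacle I anticipate is bookkeeping the boundary/flux terms carefully in the mixed-boundary case when testing against the constant function, and in particular checking that the surviving endpoint contribution is genuinely of size $h^{2k+1/2}$ (not worse): this requires invoking the nodal supercloseness $e_{q,n}$ or $e_{u,n}$ from \eqref{super_node2}, which carry the extra $h^{-1/2}$ inverse-inequality loss, hence the $h^{2k+1/2}$ rather than $h^{2k+1}$ rate and the $h^{1/2}$ prefactor in \eqref{cell-average3}. A secondary technical point is tracking the two flux conventions \eqref{flux1}/\eqref{flux2} in parallel and confirming the sign conventions in $u-u_h = (u-P_h^\mp u) + W_2^k + \eta_u$ line up with the definitions \eqref{interpolation}, \eqref{interpolation1} — but, as the authors note, this is routine and both cases run along the same line. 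Once the domain-average time-evolution identity is in hand, \eqref{cell-average1} and the first inequality of \eqref{cell-average3} are immediate; the cell-average bounds \eqref{cell-average} then follow by combining the triangle-inequality decomposition with \eqref{estimate:w}, \eqref{spclossness}, and the elementary inverse estimate $\frac1N\sum_j\|v\|_{0,\infty,\tau_j}^2\lesssim\|v\|_0^2$ already used in the proof of Theorem \ref{theo:1}.
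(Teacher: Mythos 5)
Your treatment of the domain averages \eqref{cell-average1}--\eqref{cell-average3} is essentially the paper's argument: test against the constant function, let the flux terms telescope, use periodicity (or the nodal estimates of Theorem \ref{theo:1} in the mixed case) on the surviving endpoint contributions, and reduce the $u$-average to its value at $t=0$. (The paper tests the exact error equation $a^i_j(u-u_h,q-q_h;1)=0$ rather than the $\eta$-equation, which avoids the extra $\int_\Omega W^k_{1}$ and $\int_\Omega W^k_{2t}$ terms your version generates; those terms do cancel in the end, but note your claim that the right-hand side of $a^2(\eta_u,\eta_q;1)$ vanishes is not literally correct --- it equals $-\int_\Omega W^k_1$, and $\|e_q\|_d=0$ comes from an exact cancellation, obtained for free in the paper's formulation since $(u-\hat u_h)$ is periodic.)

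The cell-average bounds \eqref{cell-average} contain a genuine gap. You bound the contribution of the correction function to each cell average by $h_j^{-1/2}\|W^k_2\|_{0,\tau_j}$ and then invoke \eqref{estimate:w}, which only yields $O(h^{k+2})$ (or $O(h^{k+3/2})$ with your normalization). For every $k\ge 2$ this is strictly larger than the claimed $h^{2k+1}$ at $t=0$ (e.g. $h^{4}$ versus $h^{5}$ for $k=2$), so the triangle-inequality decomposition with the crude $L^\infty$ bound on $W^k_2$ cannot close the argument. The missing ingredient is the Legendre-orthogonality structure of the correction function: by \eqref{fir}--\eqref{fir2} and \eqref{P-F1}--\eqref{P-F2}, every term of $W^k_2$ is a combination of $L_{j,m}\pm L_{j,m-1}$ with $m\ge 1$ except the single highest-order one, so $\int_{\tau_j}W^k_2\,dx=\int_{\tau_j}\bar w_{1,r}\,dx$ ($k=2r$) or $\int_{\tau_j}w_{2,r+1}\,dx$ ($k=2r+1$), and these last terms are $O(h^{2k+1})$ in $L^\infty$ by \eqref{e:1}--\eqref{e:2}, giving $\bigl|\int_{\tau_j}W^k_2\bigr|\lesssim h^{2k+2}\|u\|_{2k+2,\infty,\tau_j}$. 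This is estimate \eqref{initial_est}, and it is the crux of the cell-average proof. The paper in fact sidesteps estimating $\int_{\tau_j}W^k_2(\cdot,t)$ for $t>0$ altogether: it uses the local conservation identities $\int_{\tau_j}(q-q_h)\,dx=(u-\hat u_h)|_{j+1/2}-(u-\hat u_h)|_{j-1/2}$ and $\int_{\tau_j}(u-u_h)_t\,dx=(q-\hat q_h)|_{j+1/2}-(q-\hat q_h)|_{j-1/2}$ together with \eqref{e5}--\eqref{e6}, so that only the $t=0$ cell average of $W^k_2$ enters. Either route works, but without the orthogonality cancellation your bound for \eqref{cell-average} is off by a factor of $h^{k-1}$.
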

\begin{proof} Note that
$a^i_j(u-u_h,q-q_h;v)=0,\forall v\in V_h, i=1,2, j\in\bZ_N$. By
taking $v=1$, we obtain
\begin{eqnarray*}
   &&\int_{x_{j-\frac 12}}^{x_{j+\frac
   12}}(q-q_h)(x,t)dx=(u-\hat{u}_h)(x_{j+\frac 12},t)-(u-\hat{u}_h)(x_{j-\frac
   12},t),\\
   &&\int_{x_{j-\frac 12}}^{x_{j+\frac
   12}}(u-u_h)_t(x,t)dx=(q-\hat{q}_h)(x_{j+\frac 12},t)-(q-\hat{q}_h)(x_{j-\frac
   12},t).
\end{eqnarray*}
  In light of \eqref{e5}-\eqref{e6},
\begin{eqnarray*}
    &&\left|\int_{x_{j-\frac 12}}^{x_{j+\frac
   12}}(q-q_h)(x,t)dx\right|\lesssim h^{-\frac
   12}\|u_I^k-u_h\|_{0,\Omega_j},\\
    &&\left|\int_{x_{j-\frac 12}}^{x_{j+\frac
   12}}(u-u_h)_t(x,t)dx\right|\lesssim h^{-\frac
   12}\|q_I^k-q_h\|_{0,\Omega_j}.
\end{eqnarray*}
   Then
\[
   \|e_q\|_c\lesssim \left(\frac {1}{N}\sum_{j=1}^N h^{-3}\|u_I^k-u_h\|^2_{0,\Omega_j}\right)^{\frac
   12}\lesssim h^{-1}\|u_I^k-u_h\|_0.
\]
  The second inequality of \eqref{cell-average} follows
  directly from the estimate \eqref{spclossness}. On the other hand, since
  \begin{eqnarray*}
     \int_{x_{j-\frac 12}}^{x_{j+\frac 12}}(u-u_h)(x,t)dx&=& \int_{x_{j-\frac 12}}^{x_{j+\frac 12}}(u-u_h)(x,0)dx+\int_0^t\frac{d}{dt}
   \int_{x_{j-\frac 12}}^{x_{j+\frac 12}}(u-u_h)(x,t)dxdt\\
   &\lesssim& \int_{x_{j-\frac 12}}^{x_{j+\frac 12}}(u-u_h)(x,0)dx+th^{-\frac
   12}\|q_I^k-q_I\|_{0,\Omega_j},
\end{eqnarray*}
   then the estimate for the
  cell average of $u-u_h$ at $\tau_j,j\in\bZ_N$ at any time $t>0$ is reduced to the estimate at $t=0$.
   By the special
    initial condition,
\[
   \int_{x_{j-\frac 12}}^{x_{j+\frac 12}}(u-u_h)(x,0)dx=\int_{x_{j-\frac 12}}^{x_{j+\frac 12}}(u-u_I^k)(x,0)dx
   =\int_{x_{j-\frac 12}}^{x_{j+\frac 12}}W^k_2(x,0)dx,
\]
  where $W^k_2$ is defined by \eqref{corr_func} for fluxes \eqref{flux1}, or \eqref{f1} for fluxes \eqref{flux2}.
  When $W^k_2$ is defined by  \eqref{corr_func}, we have, from
   \eqref{fp1}-\eqref{fp2}, \eqref{fp3}-\eqref{fp4} and the orthogonal properties of Legendre polynomials,
\begin{eqnarray*}
    &&\int_{x_{j-\frac 12}}^{x_{j+\frac 12}} W^k_2(x,t) dx=\int_{x_{j-\frac 12}}^{x_{j+\frac 12}} \bar w_{1,r}(x,t)
    dx,\ \ k=2r,\\
 &&\int_{x_{j-\frac 12}}^{x_{j+\frac 12}} W^k_2(x,t) dx=\int_{x_{j-\frac
12}}^{x_{j+\frac 12}} w_{2,r+1}(x,t) dx,\ \ k=2r+1.
\end{eqnarray*}
  Recall the estimates for $\bar w_{1,r}$ and $\bar w_{2,r+1}$ in \eqref{e:1}-\eqref{e:2},  we obtain for all $k\ge 1$,
\[
   \left|\int_{x_{j-\frac 12}}^{x_{j+\frac 12}} W^k_2(x,t)
   dx\right|\lesssim h^{2k+2}\|u\|_{2k+2,\infty,\tau_j},\ \ \forall
   j\in\bZ_N.
\]
 Similarly, when $W^k_2$ is defined by \eqref{f1}, the above inequality still holds true. Then,
\begin{equation}\label{initial_est}
   \left| \int_{x_{j-\frac 12}}^{x_{j+\frac 12}} (u-u_h)(x,0)dx\right|\lesssim
   h^{2k+2}\|u\|_{2k+2,\infty,\tau_j},
\end{equation}
   which yields
\[
  \left|\int_{x_{j-\frac 12}}^{x_{j+\frac 12}}(u-u_h)(x,t)dx\right|
   \lesssim h^{2k+2}\|u\|_{2k+2,\infty,\tau_j}+th^{-\frac
   12}\|q_I^k-q_I\|_{0,\Omega_j}.
\]
   Then a direct calculation and the estimate \eqref{spclossness}
   yield the first inequality of \eqref{cell-average}.

    Now we move on to the domain average. Noticing  that
\begin{eqnarray*}
   \int_{0}^{2\pi}(q-q_h)(x,t)dx
   &=&(u-\hat{u}_h)(x_{N+\frac 12},t)-(u-\hat{u}_h)(x_{\frac
   12},t),\\
\int_{0}^{2\pi}(u-u_h)_t(x,t)dx
   &=&(q-\hat{q}_h)(x_{N+\frac 12},t)-(q-\hat{q}_h)(x_{\frac
   12},t),
\end{eqnarray*}
  the second inequalities of \eqref{cell-average1} and  \eqref{cell-average3} follow from
  the fact $(u-\hat{u}_h)(x_{N+\frac 12},t)=(u-\hat{u}_h)(x_{\frac 12},t)$
  for the
  periodic boundary condition  and \eqref{super_node2}
  for the mixed  boundary condition, respectively. As for the
   domain average of $u-u_h$,  by
\eqref{super_node2}, the fact that
 $(q-\hat{q}_h)(x_{N+\frac 12},t)=(q-\hat{q}_h)(x_{\frac 12},t)$ for
 the
 periodic boundary condition, we obtain for the periodic boundary condition
\[
    \int_{0}^{2\pi}(u-u_h)(x,t)dx=\int_{0}^{2\pi}(u-u_h)(x,0)dx,
\]
    and for the mixed  boundary condition
\[
    \left|\int_{0}^{2\pi}(u-u_h)(x,t)dx\right|\lesssim
    \left|\int_{0}^{2\pi}(u-u_h)(x,0)dx\right| + (t^{\frac 32}+t)h^{2k+\frac
   12}\|u\|_{2k+2,\infty}.
\]
   In light of \eqref{initial_est}, the first inequalities of \eqref{cell-average1} and \eqref{cell-average3} follow.
\end{proof}

\subsection{Superconvergence of the function value approximation at Radau points}
  As a by-product of \eqref{spclossness}, we have the following superconvergence
  results of the function value approximation at Radau points.

\begin{theorem}\label{theo:3}
   Suppose all the conditions of Corollary \ref{coro:1} hold.  For both the periodic and mixed  boundary
   conditions, there hold,
\begin{eqnarray}\label{radau5}
   e_{u,r}\lesssim
   (1+t\sqrt{h})h^{k+2}\|u\|_{k+4,\infty},\ \ e_{q,l}\lesssim
  (1+\sqrt{th})h^{k+2}\|u\|_{k+4,\infty}
\end{eqnarray}
  for fluxes \eqref{flux1} and
\begin{eqnarray}\label{radu7}
   e_{u,l}\lesssim
   (1+t\sqrt{h})h^{k+2}\|u\|_{k+4,\infty},\ \
   e_{q,r}\lesssim
   (1+\sqrt{th})h^{k+2}\|u\|_{k+4,\infty}
\end{eqnarray}
  for fluxes \eqref{flux2}. Here
\begin{eqnarray*}
   &&e_{u,r}=\max_{(j,m)\in\bZ_N\times\bZ_k}\left|(u-u_h)(R^r_{j,m},t)\right|,\
   e_{u,l}=\max_{(j,m)\in\bZ_N\times\bZ_k}\left|(u-u_h)(R^l_{j,m},t)\right|,\\
   &&e_{q,r}=\max_{(j,m)\in\bZ_N\times\bZ_k}\left|(q-q_h)(R^r_{j,m},t)\right|,\ \
   e_{q,l}=\max_{(j,m)\in\bZ_N\times\bZ_k}\left|(q-q_h)(R^l_{j,m},t)\right|.
\end{eqnarray*}
\end{theorem}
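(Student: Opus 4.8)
The plan is to split the nodal error at a Radau point into the Gauss--Radau projection error, a correction function, and the supercloseness quantity $u^l_I-u_h$ (or $q^l_I-q_h$) handled through a local inverse inequality, and to balance these at the refinement level $l=2$. The point is that Theorem~\ref{theo:0} at $l=2$ gives $\|u^2_I-u_h\|_0\lesssim(1+t)h^{k+3}\|u\|_{k+4,\infty}$, so after the $h^{-1/2}$ loss of the inverse inequality this term still contributes only $O((1+t)h^{k+5/2})$, whereas the coarser bound $\|P^-_hu-u_h\|_0\lesssim(1+th)h^{k+2}\|u\|_{k+4,\infty}$ from Corollary~\ref{coro:1} would degrade to $h^{k+3/2}$ and fail to be sharp for small~$t$. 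All the estimates below are local in each $\tau_j$ away from the boundary, so the periodic and mixed cases are treated simultaneously.

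First I would establish superconvergence of the Gauss--Radau projection at the Radau points. For the fluxes \eqref{flux1}, fix $j\in\bZ_N$; from the Legendre expansion \eqref{expansion} and the interpolation property $(u-P^-_hu)(x^-_{j+\frac12})=0$ (together with $L_{j,m}(x^-_{j+\frac12})=1$) one gets $\bar u_{j,k}=-\sum_{m\ge k+1}u_{j,m}$, hence
\[
(u-P^-_hu)(x)=u_{j,k+1}\big(L_{j,k+1}-L_{j,k}\big)(x)+\sum_{m\ge k+2}u_{j,m}\big(L_{j,m}-L_{j,k}\big)(x).
\]
Since $|u_{j,m}|\lesssim h^m\|u\|_{m,\infty,\tau_j}$ and $\|L_{j,m}\|_{0,\infty,\tau_j}=1$, the second sum is $O(h^{k+2}\|u\|_{k+2,\infty,\tau_j})$; as the right Radau points $R^r_{j,m}$ are exactly the zeros of $L_{j,k+1}-L_{j,k}$ other than $x_{j+\frac12}$, evaluation there annihilates the leading term and gives $|(u-P^-_hu)(R^r_{j,m})|\lesssim h^{k+2}\|u\|_{k+2,\infty}$. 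The same computation, now using $(q-P^+_hq)(x^+_{j-\frac12})=0$ and $L_{j,m}(x^+_{j-\frac12})=(-1)^m$, shows $|(q-P^+_hq)(R^l_{j,m})|\lesssim h^{k+2}\|u\|_{k+2,\infty}$ at the left Radau points (the zeros of $L_{j,k+1}+L_{j,k}$); for the fluxes \eqref{flux2} the roles of $P^-_h,P^+_h$ and of the left/right Radau points interchange.

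Next I would assemble the bound. By \eqref{interpolation}, $u-u_h=(u-P^-_hu)+W^2_2+(u^2_I-u_h)$. The first term is $O(h^{k+2}\|u\|_{k+2,\infty})$ at $R^r_{j,m}$ by the previous step, the second satisfies $\|W^2_2\|_{0,\infty,\tau_j}\lesssim h^{k+2}\|u\|_{k+4,\infty}$ by \eqref{estimate:w}, and for the third, since $u^2_I-u_h\in V_h$, the inverse inequality and Theorem~\ref{theo:0} with $l=2$ yield
\[
\big|(u^2_I-u_h)(R^r_{j,m})\big|\le\|u^2_I-u_h\|_{0,\infty,\tau_j}\lesssim h^{-\frac12}\|u^2_I-u_h\|_0\lesssim(1+t)h^{k+\frac52}\|u\|_{k+4,\infty}.
\]
Taking the maximum over $(j,m)\in\bZ_N\times\bZ_k$ gives $e_{u,r}\lesssim(1+(1+t)\sqrt h)h^{k+2}\|u\|_{k+4,\infty}\lesssim(1+t\sqrt h)h^{k+2}\|u\|_{k+4,\infty}$. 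For $e_{q,l}$ one instead writes $q-q_h=(q-P^+_hq)+W^2_1+(q^2_I-q_h)$ and uses $\|q^2_I-q_h\|_0\lesssim(1+t^{\frac12})h^{k+3}\|u\|_{k+4,\infty}$ from Theorem~\ref{theo:0}, which after the inverse inequality produces the factor $1+\sqrt{th}$; this proves \eqref{radau5}. The fluxes \eqref{flux2} are handled identically after swapping $P^-_h\leftrightarrow P^+_h$ and left $\leftrightarrow$ right Radau points, giving \eqref{radu7}.

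The only delicate point is the one noted at the outset: the supercloseness term must survive the inverse inequality, which is why one works with Theorem~\ref{theo:0} at the refined level $l=2$ rather than directly with Corollary~\ref{coro:1}; the projection superconvergence and the bound \eqref{estimate:w} on the correction functions are already in hand from Section~3 and the earlier part of Section~4.
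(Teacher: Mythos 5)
Your proposal is correct and follows essentially the same route as the paper: decompose $u-u_h$ at a Radau point into the Gauss--Radau projection error, the correction function $W^2_2$, and the supercloseness term $u^2_I-u_h$, bound the last via the inverse inequality and Theorem~\ref{theo:0} with $l=2$, and use \eqref{estimate:w} for the middle term. The only difference is that you derive the projection superconvergence at the Radau points explicitly from the Legendre expansion, whereas the paper simply invokes it as standard approximation theory; your balancing of the exponents and of the factors $1+t\sqrt h$ and $1+\sqrt{th}$ matches the paper's.
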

\begin{proof}
  We first consider \eqref{radau5}.  By using the inverse
inequality and choosing $l=2$ in \eqref{spclossness}, we obtain
\begin{eqnarray*}
   &&\|u^2_I-u_h\|_{0,\infty}\lesssim h^{-\frac 12}\|u^2_I-u_h\|_0\lesssim
  (1+t) h^{k+\frac 52}\|u\|_{k+4,\infty},\\
 &&\|q^2_I-q_h\|_{0,\infty}\lesssim h^{-\frac
 12}\|q^2_I-q_h\|_0\lesssim (1+t^{\frac 12})
   h^{k+\frac 52}\|u\|_{k+4,\infty}.
\end{eqnarray*}
   By \eqref{estimate:w} and the triangular inequality,
 \begin{eqnarray}\label{r5}
   &&\|u_h-P_h^-u\|_{0,\infty} \lesssim \|W_2^2\|_{0,\infty}+\|u^2_I-u_h\|_{0,\infty}\lesssim (1+t\sqrt{h})h^{k+2}\|u\|_{k+4,\infty},\\
   \label{r6}
 &&\|q_h-P_h^+q\|_{0,\infty} \lesssim \|W^2_1\|_{0,\infty}+\|q^2_I-q_h\|_{0,\infty}\lesssim
 (1+\sqrt{th})h^{k+2}\|u\|_{k+4,\infty}.
\end{eqnarray}
   For all $v\in W^{k+2,\infty}(\Omega)$, the standard approximation
   theory gives
\[
  \left|(v-P_h^-v)(R^r_{j,m},t)\right|\lesssim
   h^{k+2}\|v\|_{k+2,\infty},\ \  \left|(v-P_h^+v)(R^l_{j,m},t)\right|\lesssim
    h^{k+2}\|v\|_{k+2,\infty}.
\]
   Then \eqref{radau5} follows. The proof of \eqref{radu7}
   can be obtained by the same arguments.
\end{proof}

\subsection{Superconvergence of the derivative approximation at Radau points}

   For all $v\in W^{k+2,\infty}(\Omega)$, it is shown in
   \cite{Cao;zhang;zou:2k+1} that
\begin{equation}\label{r7}
\left|\big(v_x-(P_h^-v)_x\big)(R^l_{j,m},t)\right|\lesssim
  h^{k+1}\|v\|_{k+2,\infty},\ \ \forall (j,m)\in\bZ_{N}\times\bZ_k.
\end{equation}
 Similarly, we can obtain
\begin{equation}\label{r8}
\left|\big(v_x-(P_h^+v)_x\big)(R^r_{j,m},t)\right|\lesssim
  h^{k+1}\|v\|_{k+2,\infty},\ \ \forall (j,m)\in\bZ_{N}\times\bZ_k.
\end{equation}

  We have the following superconvergence  results.
\begin{theorem}\label{theo:2}
   Suppose all the conditions of Corollary \ref{coro:1} hold. Let
\begin{eqnarray*}
 &&  e_{ux,l}=\max_{j,m}\left|(u_x-u_{hx})(R^l_{j,m},t)\right|,\
   \
   e_{ux,r}=\max_{j,m}\left|(u_x-u_{hx})(R^r_{j,m},t)\right|,\\
&&
  e_{qx,l}=\max_{j,m}\left|(q_x-q_{hx})(R^l_{j,m},t)\right|,\
   \
   e_{qx,r}=\max_{j,m}\left|(q_x-q_{hx})(R^r_{j,m},t)\right|.
\end{eqnarray*}
  For both the periodic and mixed  boundary
   conditions, there hold,
\begin{eqnarray}\label{radau_2}
    e_{ux,l}
  \lesssim
   (1+t\sqrt{h})h^{k+1}\|u\|_{k+4,\infty},\ \ e_{qx,r}\lesssim
   (1+\sqrt{th})h^{k+1}\|u\|_{k+4,\infty}
\end{eqnarray}
  for fluxes \eqref{flux1} and
\begin{eqnarray}\label{radu4}
  e_{ux,r}\lesssim
   (1+t\sqrt{h})h^{k+1}\|u\|_{k+4,\infty},\ \ e_{qx,l}\lesssim
   (1+\sqrt{th})h^{k+1}\|u\|_{k+4,\infty}
\end{eqnarray}
  for fluxes \eqref{flux2}.
\end{theorem}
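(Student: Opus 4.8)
The plan is to mimic the argument of Theorem \ref{theo:3}, simply replacing the function-value inverse inequality by a derivative inverse inequality and replacing the Radau-point projection estimates used there by the derivative versions \eqref{r7}--\eqref{r8}. I will write out the reasoning for the fluxes \eqref{flux1}; the case \eqref{flux2} is identical after interchanging the roles of $P_h^-$ and $P_h^+$ (and of the left and right Radau points), using $u^2_I=P_h^+u-W^2_2$, $q^2_I=P_h^-q-W^2_1$ as in \eqref{interpolation1}.

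First I would invoke Theorem \ref{theo:0} with $l=2$, which is admissible under the hypotheses of Corollary \ref{coro:1} ($u\in W^{k+4,\infty}$), to obtain the supercloseness bounds $\|u^2_I-u_h\|_0\lesssim(1+t)h^{k+3}\|u\|_{k+4,\infty}$ and $\|q^2_I-q_h\|_0\lesssim(1+t^{1/2})h^{k+3}\|u\|_{k+4,\infty}$. Since $u^2_I-u_h$ and $q^2_I-q_h$ lie in $\bP_k$ on each $\tau_j$, applying the inverse inequality twice ($L^\infty$ from $L^2$, then differentiation) gives $\|(u^2_I-u_h)_x\|_{0,\infty,\tau_j}\lesssim h^{-1}\|u^2_I-u_h\|_{0,\infty,\tau_j}\lesssim h^{-3/2}\|u^2_I-u_h\|_{0,\tau_j}$, and likewise for $q$; taking the maximum over $j$ yields $\|(u^2_I-u_h)_x\|_{0,\infty}\lesssim(1+t)h^{k+3/2}\|u\|_{k+4,\infty}$ and $\|(q^2_I-q_h)_x\|_{0,\infty}\lesssim(1+t^{1/2})h^{k+3/2}\|u\|_{k+4,\infty}$. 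Next I would control the derivatives of the correction functions: from \eqref{estimate:w}, $\|W^2_1\|_{0,\infty,\tau_j}+\|W^2_2\|_{0,\infty,\tau_j}\lesssim h^{k+2}\|u\|_{k+4,\infty,\tau_j}$, and since $W^2_1,W^2_2\in\bP_k$ on each $\tau_j$ the inverse inequality gives $\|(W^2_1)_x\|_{0,\infty}+\|(W^2_2)_x\|_{0,\infty}\lesssim h^{k+1}\|u\|_{k+4,\infty}$; more precisely, $W^2_2=\bar w_{1,1}+w_{2,1}$ with dominant term $w_{2,1}=\bar h_j Q_0 F_{2,1}$, whose $x$-derivative is $O(h^{k+1})$ because $F_{2,1}$ is a fixed-degree Legendre combination with bounded coefficients (Lemma \ref{lemma:1}), while the remaining pieces are of higher order. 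Combining with the triangle inequality gives $\|u_{hx}-(P_h^-u)_x\|_{0,\infty}\lesssim(1+t\sqrt h)h^{k+1}\|u\|_{k+4,\infty}$ and $\|q_{hx}-(P_h^+q)_x\|_{0,\infty}\lesssim(1+\sqrt{th})h^{k+1}\|u\|_{k+4,\infty}$.

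Finally I would close the argument using the projection derivative superconvergence at Radau points: by \eqref{r7}, $|(u_x-(P_h^-u)_x)(R^l_{j,m},t)|\lesssim h^{k+1}\|u\|_{k+2,\infty}$, and by \eqref{r8}, $|(q_x-(P_h^+q)_x)(R^r_{j,m},t)|\lesssim h^{k+1}\|q\|_{k+2,\infty}\lesssim h^{k+1}\|u\|_{k+3,\infty}$ (using $q=u_x$). Adding the two contributions at each Radau point via the triangle inequality yields $e_{ux,l}\lesssim(1+t\sqrt h)h^{k+1}\|u\|_{k+4,\infty}$ and $e_{qx,r}\lesssim(1+\sqrt{th})h^{k+1}\|u\|_{k+4,\infty}$, which is \eqref{radau_2}; the estimate \eqref{radu4} follows in exactly the same way, applying \eqref{r8} to $u$ at the right Radau points and \eqref{r7} to $q$ at the left Radau points.

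The only delicate point is the bookkeeping in the inverse inequality: differentiating costs an extra factor $h^{-1}$ over what was needed in Theorem \ref{theo:3}, so the $h^{k+3}$ supercloseness obtained from $l=2$ degrades to $h^{k+3/2}$ for the derivative of $u^2_I-u_h$. This is precisely why the rate drops to $k+1$ and why the time dependence enters as $t\sqrt h$ (respectively $\sqrt{th}$) rather than as $t$. One must also verify that the correction-function term $\|(W^2_\ell)_x\|_{0,\infty}$ is genuinely $O(h^{k+1})$ and not larger; this is the step where the explicit polynomial structure of the $F$'s from Lemma \ref{lemma:1} and \eqref{P-F1}--\eqref{P-F2}, together with \eqref{estimate:w}, is essential.
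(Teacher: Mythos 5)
Your proposal is correct and follows essentially the same route as the paper: the paper likewise takes the $L^\infty$ bounds on $u_h-P_h^-u$ and $q_h-P_h^+q$ from the proof of Theorem \ref{theo:3} (which already combine the $l=2$ supercloseness with the correction-function estimate \eqref{estimate:w}), applies the inverse inequality once more to pass to the $W^{1,\infty}$ seminorm, and concludes via \eqref{r7}--\eqref{r8} and the triangle inequality. The only cosmetic difference is that you differentiate the two pieces $u^2_I-u_h$ and $W^2_2$ separately before adding, whereas the paper adds first and differentiates the single piecewise polynomial $P_h^-u-u_h$; the bookkeeping and the resulting constants are identical.
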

\begin{proof}
    Using  the inverse inequality in \eqref{r5}-\eqref{r6} gives
\begin{eqnarray*}
   && |P_h^-u-u_h|_{1,\infty}\lesssim (1+t\sqrt{h})h^{k+1}\|u\|_{k+4,\infty},\\
   && |P_h^+q-q_h|_{1,\infty}\lesssim
   (1+\sqrt{th})h^{k+1}\|u\|_{k+4,\infty}.
\end{eqnarray*}
  Then the desired result
  \eqref{radau_2} follows from \eqref{r7}-\eqref{r8} and the triangular inequality.
  The proof of \eqref{radu4} is following the
  same line.
\end{proof}

To end  this section, we would like to demonstrate how to calculate
$u^l_I(x,0),1\le l\le k$
 only using the information of the initial value $u_0(x)$. Without loss
 of generality, we consider the fluxes choice \eqref{flux1}.
Since $u_t=u_{xx}$, we have for all integers $i\ge 1$
\[
\frac{\partial^i}{\partial t^i}u(x,0)= u_0^{(2i)}(x), \quad \forall
x\in\Omega.
\]
Therefore, by \eqref{coefficient}, we have the derivatives at $t=0$,
for all $1\le i \le \lceil { k/2 }\rceil$
\begin{eqnarray}\label{derivm_u}
&&\bar{u}^{(i)}_{j,k}=-u_0^{(2i)}(x^-_{j+\frac
12})+\frac{1}{h_j}\int_{\tau_j}
u_0^{(2i)}\sum_{m=0}^k(2m+1)L_{j,m},\\
\label{derivm_q}
&&\tilde{q}^{(i)}_{j,k}=(-1)^{k+1}u_0^{(2i+1)}(x^+_{j-\frac
12})+\frac{1}{h_j}\int_{\tau_j}
u_0^{(2i+1)}\sum_{m=0}^{k}(-1)^{k+m}(2m+1)L_{j,m}.
\end{eqnarray}

Now we divide the process into the following steps :
\begin{itemize}
    \item[1.] In each element of $\tau_j$, calculate
    $G_i=\bar{u}^{(i)}_{j,k}, Q_i=\tilde{q}^{(i)}_{j,k}$ by
    \eqref{derivm_u}-\eqref{derivm_q}.
    \item[2.] Compute $F_{1,i},F_{2,i}$ from \eqref{correcti} and \eqref{correctii}.
    \item[3.] Calculate $\bar{F}_{1,i}$ by $F_{1,i}$ and \eqref{barF}.
    \item[4.] Choose
    $\bar{w}_{1,i}=h^{2i}G_i\bar{F}_{1,i},{w}_{2,i}=h^{2i-1}Q_{i-1}{F}_{2,i}$
    and $w^l=\sum\limits_{i=1}^{\lfloor l/2\rfloor}\bar{w}_{1,i}+\sum\limits_{i=1}^{\lceil
    l/2\rceil}{w}_{2,i}$.
   \item[4.] Figure out $u^l_I=P^-_hu_0-w^l$.
\end{itemize}

\section{Numerical results}
   In this section, we present numerical examples to verify our
   theoretical findings. We shall measure  various norms, including
   $\xi_u,\xi_q$, the numerical fluxes at nodes, interior left and right Radau
   points, and the domain and cell averages, which are defined in Corollary \ref{coro:1}
   and Theorems \ref{theo:1}-\ref{theo:2}, respectively.

{\it Example} 1. We consider the following problem
\begin{eqnarray*}
\begin{aligned}
   &u_t=u_{xx},\ \ &&(x,t)\in [0,2\pi]\times(0, 1], \\
   &u(x,0)=\sin(x),\ \ && x\in [0,2\pi]
\end{aligned}
\end{eqnarray*}
  with periodic boundary condition $u(0,t)=u(2\pi,t)$. The exact solution is
\[
   u(x,t)=e^{-t}\sin(x).
\]

  We solve this problem by the LDG scheme \eqref{LDG_scheme} with $k=3,4$,
  respectively. The numerical fluxes are chosen as \eqref{flux1}, and
  the initial solution $u_h(x,0)=u_I^k(x,0)$ with $u_I^k$ defined by
  \eqref{interpolation}. We construct our meshes by equally dividing
  each interval,  $[0,\frac{3\pi}{4}]$ and $[\frac{3\pi}{4},2\pi]$, into ${N}/{2}$ subintervals,  $N=2^m$, $m=2,3,\ldots,7$.
  To reduce the time discretization error, we use the ninth order strong-stability preserving (SSP) Runge-Kutta  method \cite{Gottlieb;Shu:SIAMSSP}
  with time step $\triangle t=0.01h^2_{min}$
   in $k=3$ and $\triangle t=0.001h^2_{min}$ in $k=4$, where $h_{min}={3\pi}/{2N}$.

Numerical data are demonstrated in Tables \ref{T1}-\ref{T2}, and
corresponding error curves are depicted in Figures \ref{F1}-\ref{F2}
on the log-log scale.

\begin{table}[htbp]\caption{  Various errors in the periodic boundary condition for $k=3$.}\label{T1} \centering
\begin{threeparttable}
        \begin{tabular}{c c c c c c c c   }
        \hline
     N     & $\|\xi_u\|_0$ & $e_{u,r}$ &  $e_{ux,l}$ & $e_{u,n}$& $\|e_u\|_*$  & $\|e_u\|_c$ & $\|e_u\|_d$   \\
        \hline\cline{1-8}
 4  & 5.06e-04  & 2.82e-04  & 1.72e-04 &  1.04e-04 &  5.92e-05 &  7.57e-05  & 3.12e-04 \\
  8  & 1.29e-05  & 5.13e-06  & 4.14e-06 &  7.19e-07 &  4.17e-07 &  5.11e-07  & 2.00e-06\\
 16  & 3.92e-07  & 1.35e-07 &  1.25e-07 &  5.65e-09 &  3.16e-09 &  3.89e-09  &1.44e-08\\
 32  & 1.22e-08 &  3.98e-09 &  3.92e-09 &   4.37e-11&   2.44e-11&   3.01e-11 & 1.08e-10\\
 64  & 3.80e-10 &  1.24e-10 &  1.23e-10 &  3.40e-13 &  1.89e-13 &  2.34e-13  &8.30e-13\\
 128 &  1.19e-11&   3.85e-12&   3.85e-12&   2.65e-15 & 1.47e-15& 1.83e-15  & 6.43e-15\\
\hline\cline{1-8}
 N     & $\|\xi_q\|_0$ & $e_{q,l}$ &  $e_{qx,r}$ & $e_{q,n}$& $\|e_q\|_*$  & $\|e_q\|_c$ & $\|e_q\|_d$   \\
        \hline\cline{1-8}
 4 &   4.01e-04 &  1.72e-04  & 2.41e-04 &  1.12e-04 &  6.67e-05 &  3.74e-05  & 1.84e-13\\
 8  &   1.11e-05 &  4.14e-06 &  4.76e-06 &  6.86e-07 &  4.12e-07 &  3.18e-07 &  4.05e-17\\
 16 &  3.44e-07 &  1.25e-07 &  1.32e-07 &  5.24e-09 &  3.17e-09 &  2.56e-09 &  9.64e-21\\
 32 &  1.07e-08  & 3.92e-09 &  3.98e-09 &  4.14e-11 &  2.47e-11 &  2.02e-11  & 2.34e-24\\
 64  &   3.35e-10  & 1.23e-10 &  1.24e-10 &  3.24e-13 &  1.93e-13 &  1.59e-13 &  5.70e-28\\
 128  &  1.05e-11  & 3.85e-12 &  3.85e-12 &  2.54e-15 &  1.51e-15 &  1.24e-15 &  1.39e-31\\
 \hline
       \end{tabular}
 \end{threeparttable}
\end{table}

\begin{table}[htbp]\caption{  Various errors in the periodic boundary condition for  $k=4$ }\label{T2} \centering
\begin{threeparttable}
        \begin{tabular}{c c c c c c c c   }
        \hline
   N     & $\|\xi_u\|_0$ & $e_{u,r}$ &  $e_{ux,l}$ & $e_{u,n}$& $\|e_u\|_*$  & $\|e_u\|_c$ & $\|e_u\|_d$   \\
        \hline\cline{1-8}
 4 &   2.34e-05 &  8.40e-06  &  1.06e-05   & 1.47e-06   & 7.96e-07    &1.04e-06    &2.94e-06 \\
 8  &  3.92e-07  &  1.32e-07  &  1.28e-07  &  2.62e-09  &  1.47e-09   & 1.74e-09  &  5.96e-09 \\
 16 &   6.21e-09  &  2.02e-09 &   2.05e-09  &  5.06e-12   & 2.85e-12  &  3.37e-12  &  1.21e-11 \\
32  &   9.73e-11  &  3.18e-11  &  3.18e-11  &  1.01e-14   & 5.59e-15   & 6.60e-15  &  2.41e-14 \\
64  &   1.52e-12  &  4.97e-13  &  4.97e-13  &  1.97e-17   & 1.09e-17   & 1.29e-17  &  4.77e-17 \\
128 &    2.38e-14  &  7.76e-15  &  7.76e-15  &  3.85e-20   & 2.14e-20   & 2.53e-20  &  9.37e-20 \\
 \hline\cline{1-8}
 N     & $\|\xi_q\|_0$ & $e_{q,l}$ &  $e_{qx,r}$ & $e_{q,n}$& $\|e_q\|_*$  & $\|e_q\|_c$ & $\|e_q\|_d$   \\
        \hline\cline{1-8}
 4 &  3.06e-05 &  1.07e-05 &  9.30e-06 &  1.92e-06 &  1.14e-06  & 6.52e-07 &  1.86e-13\\
 8 &  4.56e-07 &  1.28e-07  & 1.33e-07 &  2.41e-09 & 1.50e-09   &1.23e-09  & 4.06e-17\\
 16 & 7.09e-09 &  2.05e-09  & 2.03e-09 &  4.44e-12 &  2.78e-12  & 2.37e-12 &  9.64e-21\\
 32 &  1.11e-10 &  3.18e-11  & 3.18e-11 &  8.55e-15 &  5.36e-15  & 4.60e-15 &  2.34e-24\\
 64 &  1.73e-12 &  4.97e-13  & 4.96e-13 &  1.66e-17 &  1.04e-17  & 8.97e-18 &  5.70e-28\\
128 &   2.70e-14 &  7.76e-15  & 7.76e-15 &  3.25e-20 &2.03e-20  & 1.75e-20 &  1.39e-31 \\
 \hline
\end{tabular}
 \end{threeparttable}
\end{table}

\begin{figure}[htbp]
\scalebox{0.5}{\includegraphics{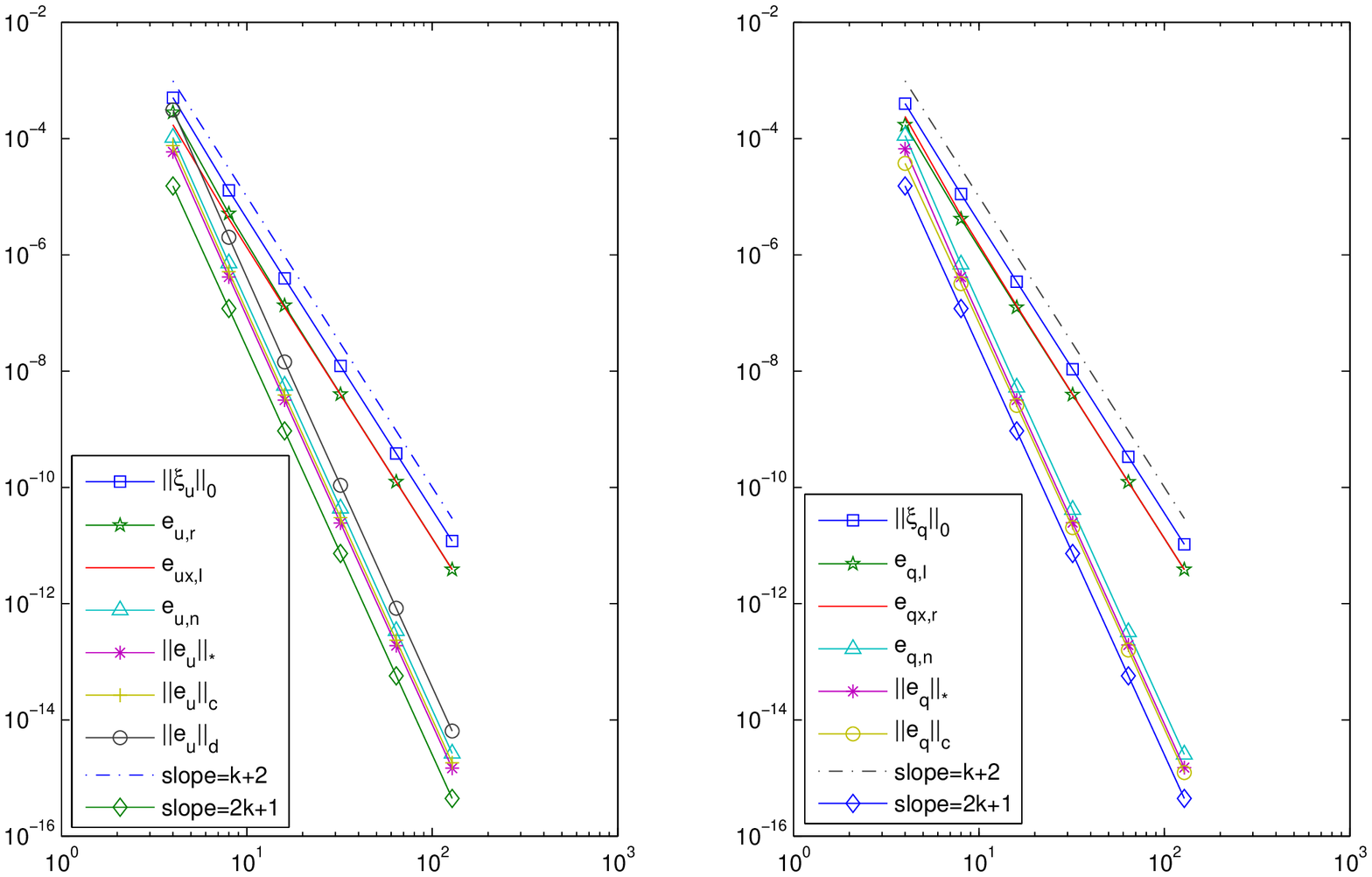}}
 \caption{Error curves in the periodic boundary condition for $k=3$.}\label{F1}
\end{figure}

\begin{figure}[htbp]
\scalebox{0.5}{\includegraphics{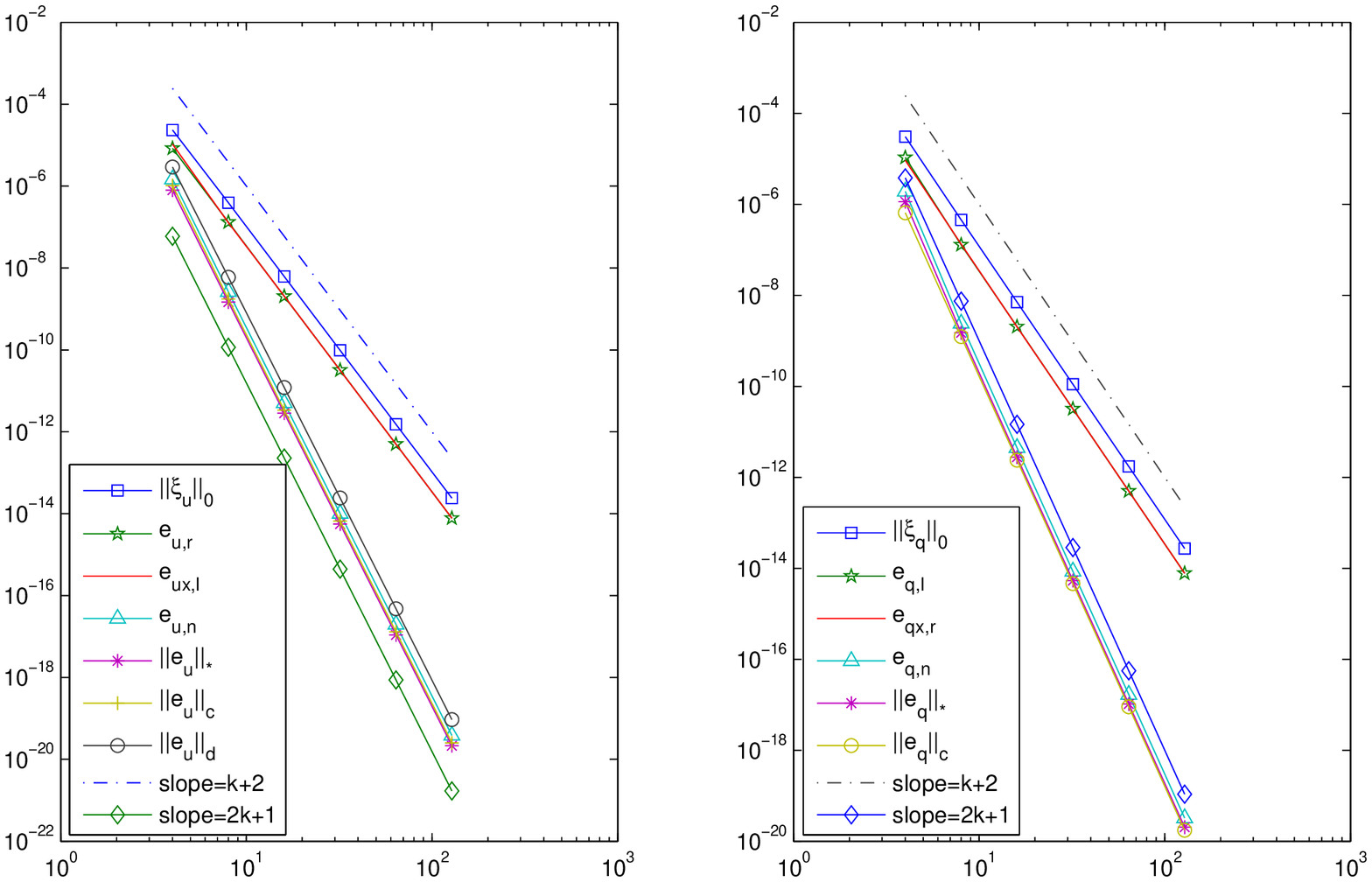}}
 \caption{Error curves in the periodic boundary condition for $k=4$}\label{F2}
\end{figure}

 We observe from  Figures \ref{F1}-\ref{F2} a convergence rate $k+2$ for
 $\|\xi_u\|_0, \|\xi_q\|_0$, $e_{u,r},e_{q,l}$, $e_{ux,l}$ and $e_{qx,r}$, and $2k+1$ for
 $\|e_u\|_*, \|e_q\|_*$  and $ e_{u,n},e_{q,n}$. These results
 confirm our theoretical findings in Corollary \ref{coro:1}, Theorem \ref{theo:1}, and Theorems \ref{theo:3}-\ref{theo:2} :
  for fluxes choice \eqref{flux1}, the LDG solution $(u_h,q_h)$ is
  $k+2$th order superconvergent to the Gauss-Radau projection of the
  exact solution $(P_h^-u, P_h^+q)$;
  the function value error $u-u_h$ at right Radau points and its derivative error $u'-u_h' $ at interior left Radau points,
  and $q-q_h$ at left Radau points and $q'-q_h'$ at interior right Radau points, all converge with the same rate $k+2$;
  the maximum and average errors of $u-u_h$ and $q-q_h$
  are supercovergent at downwind points and upwind points, respectively, with  the same rate $2k+1$.
  Moreover, our numerical results demonstrate that the superconvergence
  rates in \eqref{xi:1}, \eqref{super_node3} and \eqref{radau5} are optimal;
 while the convergence rate for the derivative
approximation at
 Radau points are one order better than the estimate provided in
 \eqref{radau_2}.

 For the domain and cell averages, we first observe, from Tables
 \ref{T1}-\ref{T2}, that the error for the domain average of
  $q-q_h$ reaches the machine precision at the initial mesh, which
  indicates  the equality in
  \eqref{cell-average1} is true. Then from Figures
  \ref{F1}-\ref{F2}, we observe a $2k+1$th superconvergence rate for
  the domain average of $u-u_h$, as predicted in
  \eqref{cell-average1}. Furthermore,
   we also observe $2k+1$th superconvergence
 rates for the cell average of $u-u_h$ and $q-q_h$, one order higher
  than the one given in \eqref{cell-average}.

 {\it Example} 2. We consider the following problem
\begin{eqnarray*}
\begin{aligned}
   &u_t+u_x=0,\ \ &&(x,t)\in [0,2\pi]\times(0,1], \\
   &u(x,0)=\sin(x)
\end{aligned}
\end{eqnarray*}
  with mixed  boundary condition
\[
   u_x(0,0)=e^{t+1},\ \ \  u(2\pi,0)=e^{-t}+e^{2\pi+t+1}.
\]
  The exact solution to this problem is
\[
   u(x,t)=e^{-t}\cos(x)+e^{x+t+1}.
\]

  The problem is solved by the LDG
 scheme \eqref{LDG_scheme} with $k=3,4$, respectively.
The numerical fluxes are chosen as \eqref{flux2}, and
  the initial solution $u_h(x,0)=u_I^k(x,0)$ with $u_I^k$ defined by
  \eqref{interpolation1}.  Uniform meshes are used, which are constructed by
  dividing the interval $[0,2\pi]$ into  $N = 2^m$ ($m=2,3,\ldots,6$) equal subintervals. The fourth order
  Runge-Kutta method is used to diminish the time discretization error
  with time step $\triangle t=T/n$  using $n=1000N^2$
  in $k=3$, and $n=5000N^2$ in $k=4$.

 Listed in Tables \ref{T3}-\ref{T4} are numerical data for  various errors
 in cases $k=3,4$. Depicted in Figures \ref{F3}-\ref{F4} are corresponding
  error curves with log-log scale.

 Again, we observe similar superconvergence  phenomena as in the
 periodic case.  To be more precise, if we choose the numerical fluxes
 \eqref{flux2},  the LDG solution $u_h$
   converges to the Gauss-Radau projection $P_h^+u$ with a rate of $k+2$, as well as the derivative approximation at
  all interior right Radau points and the function value approximation at all left Radau points;
  as for the domain and cell averages, along with the maximum and average errors at upwinding points, the convergent rate is
  $2k+1$; while for the solution $q_h$, it is
   convergent to the Gauss-Radau projection $P_h^-q$ with a rate of $k+2$, the same rate for the derivative approximation at
    all interior left Radau points and the function value approximation at all right Radau points;
    finally, convergence rates of the maximum and average
  errors at downwind points as well as the domain and cell averages are all $2k+1$. These results
 confirm our theoretical findings in Corollary \ref{coro:1}, Theorems
 \ref{theo:1}-\ref{theo:2}.
  Note that the $2k+1$th superconvergence rate
  for the domain average is $ 1/2$ order
 higher than the one given in \eqref{cell-average3},  and the $k+2$th
 superconvergence rate for the derivative approximation is one order
 better than the estimate provided in \eqref{radu4}.

\begin{table}[htbp]\caption{  Various errors in the  mixed  boundary condition for $k=3$.}\label{T3}\centering
\begin{threeparttable}
        \begin{tabular}{c c c c c c c c   }
        \hline
  N     & $\|\xi_u\|_0$ & $e_{u,l}$ &  $e_{ux,r}$ & $e_{u,n}$& $\|e_u\|_*$  & $\|e_u\|_c$ & $\|e_u\|_d$   \\
\hline\cline{1-8}
4 &  5.08e-01 &  2.63e-01  & 2.33e-01 &  5.50e-03  & 3.64e-03  & 1.46e-02  & 6.67e-02\\
8 &  1.89e-02 &  1.06e-02 &  1.05e-02 &  2.57e-05  & 1.63e-05  & 1.36e-04  & 5.59e-04\\
16&   6.28e-04 &  3.73e-04 &  3.85e-04 &  1.62e-07 &  9.89e-08 &  1.13e-06 &  4.46e-06\\
32&   2.00e-05 &  1.24e-05 &  1.29e-05 &  1.16e-09 &  7.00e-10 &  8.99e-09 & 3.49e-08\\
64 &  6.31e-07 &  4.07e-07 &  4.19e-07 &  8.80e-12 &  5.25e-12 &  7.05e-11 &  2.73e-10\\
\hline\cline{1-8}
  N     & $\|\xi_q\|_0$ & $e_{q,r}$ &  $e_{qx,l}$ & $e_{q,n}$& $\|e_q\|_*$  & $\|e_q\|_c$ & $\|e_q\|_d$   \\
 \hline\cline{1-8}
4  & 5.99e-01   & 2.33e-01   & 2.62e-01  &  4.48e-02   & 2.30e-02   & 2.02e-03   & 7.30e-04 \\
8   & 2.07e-02  &  1.05e-02  &  1.06e-02  &  4.68e-04  &  1.85e-04  &  8.39e-06  &  4.90e-06\\
16  &  6.57e-04  &  3.85e-04  &  3.73e-04 &   3.95e-06  &  1.32e-06  &  5.22e-08  &  3.62e-08\\
32  &  2.05e-05  &  1.29e-05  &  1.24e-05  &  3.14e-08  &  9.63e-09  &  3.88e-10  &  2.76e-10\\
64  &  6.38e-07  &  4.19e-07  &  4.07e-07  &  2.47e-10  &  7.21e-11   & 3.00e-12  &  2.14e-12\\
\hline
       \end{tabular}
 \end{threeparttable}
\end{table}

\begin{table}[htbp]\caption{  Various errors in the mixed  boundary condition for $k=4$. }\label{T4} \centering
\begin{threeparttable}
        \begin{tabular}{c c c c c c c c   }
        \hline
  N     & $\|\xi_u\|_0$ & $e_{u,l}$ &  $e_{ux,r}$ & $e_{u,n}$& $\|e_u\|_*$  & $\|e_u\|_c$ & $\|e_u\|_d$   \\
        \hline\cline{1-8}
4 & 3.05e-02 &  1.14e-02&  1.12e-02 &  2.59e-05  & 1.61e-05 &  1.03e-04 &   4.55e-04 \\
8 &  5.61e-04 &  2.40e-04&   2.47e-04 &  3.60e-08 &  2.12e-08 &  2.55e-07 &  1.01e-06\\
16 &  9.23e-06&   4.67e-06&   4.72e-06 &  6.42e-11&   3.77e-11 &  5.36e-10 &  2.07e-09\\
32 &  1.47e-07&   8.09e-08 &  8.14e-08 &  1.27e-13 &  7.36e-14 &  1.07e-12 &  4.10e-12\\
64 &  2.31e-09&   1.33e-09 &  1.34e-09 &  2.50e-16 &  1.46e-16 &  2.10e-15 &  8.06e-15\\
 \hline\cline{1-8}
  N     & $\|\xi_q\|_0$ & $e_{q,r}$ &  $e_{qx,l}$ & $e_{q,n}$& $\|e_q\|_*$  & $\|e_q\|_c$ & $\|e_q\|_d$   \\
        \hline\cline{1-8}
4  & 3.46e-02 &  1.12e-02 & 1.14e-02 & 3.64e-04 & 1.85e-04 &  8.32e-06 &  3.43e-06\\
8  & 5.99e-04 &  2.47e-04 & 2.40e-04 & 9.08e-07 &  3.57e-07&  1.34e-08 &  6.60e-09\\
16 & 9.54e-06 &  4.72e-06 & 4.67e-06 & 1.89e-09 &  6.33e-10&  2.42e-11 &  1.30e-11\\
32 & 1.49e-07 &  8.14e-08 & 8.09e-08 & 3.75e-12 &  1.15e-12&  4.63e-14 &  2.58e-14\\
64 & 2.33e-09 &  1.34e-09 & 1.33e-09 & 7.64e-15 &  2.16e-15&  8.99e-17 &  5.07e-17\\
 \hline
 \end{tabular}
 \end{threeparttable}
\end{table}

\begin{figure}[htbp]
\scalebox{0.5}{\includegraphics{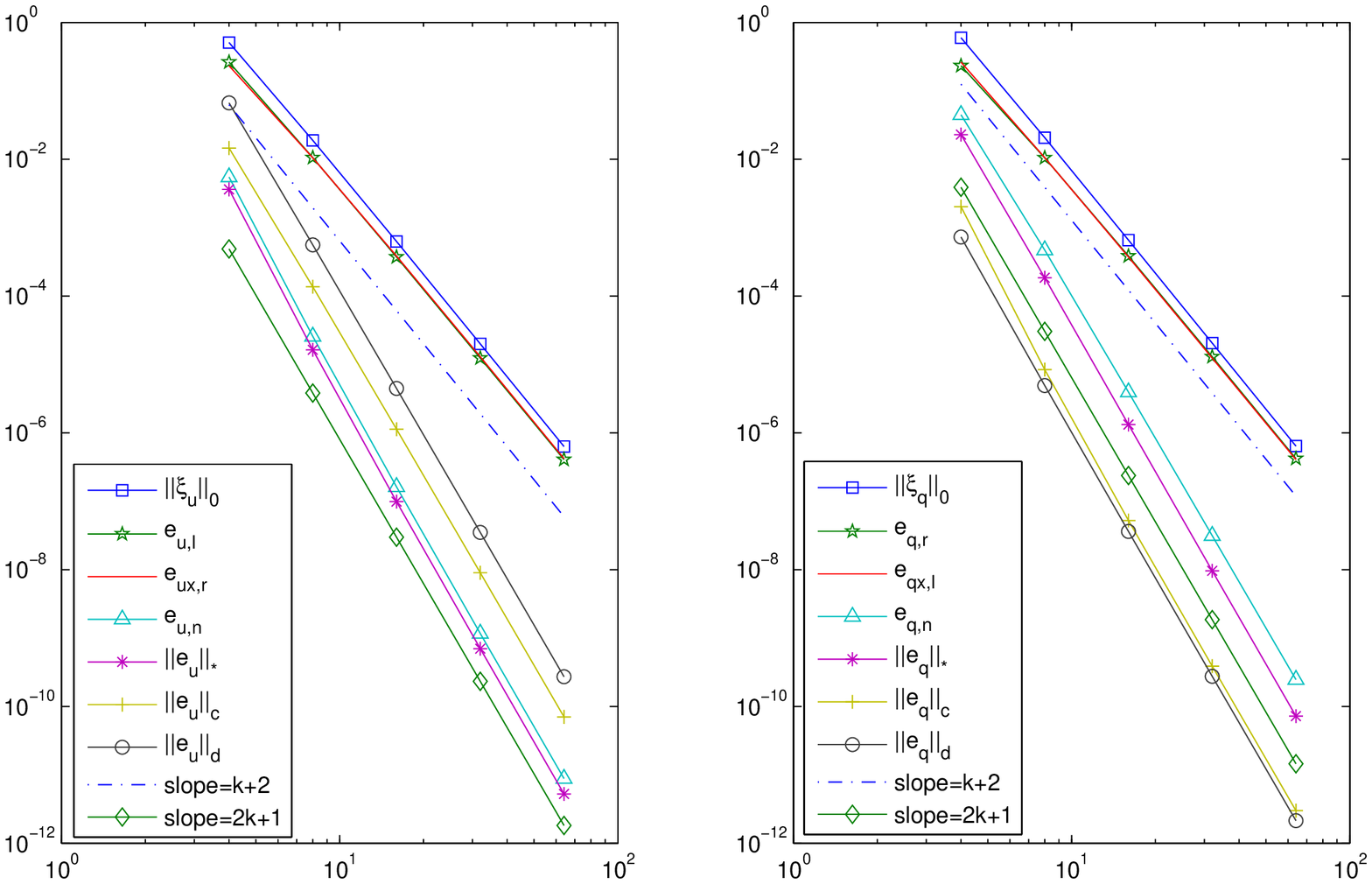}}
 \caption{ Error curves in the mixed  boundary condition for $k=3$.}\label{F3}
\end{figure}

\begin{figure}[htbp]
\scalebox{0.5}{\includegraphics{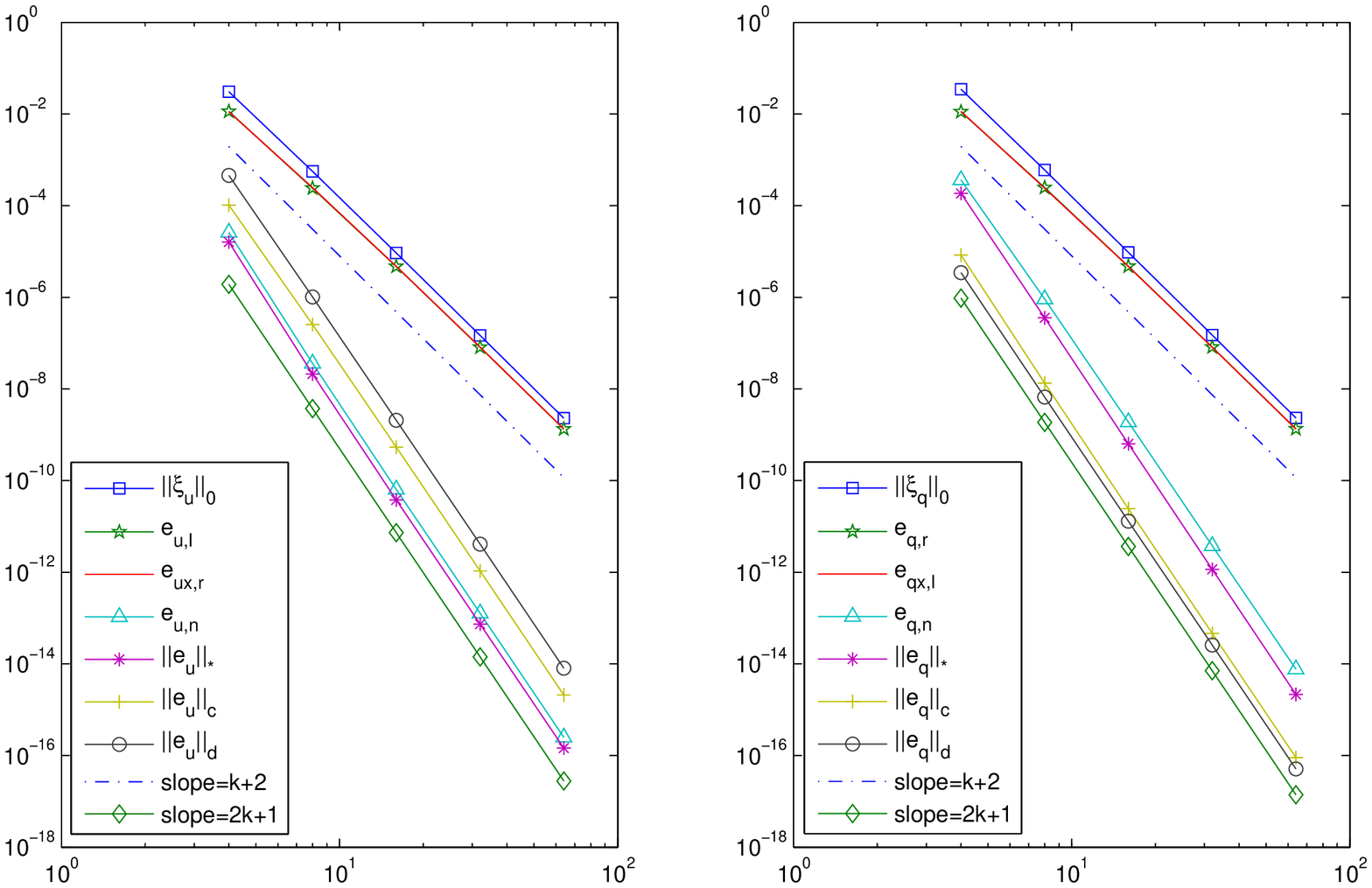}}
 \caption{Error curves in the  mixed  boundary condition for $k=4$.}\label{F4}
\end{figure}

\section{Concluding remarks}
  To summarize, we have established a $2k+1$th
superconvergence rate for the domain average and numerical fluxes at
all nodes (on average). As a direct consequence, we obtain a
$k+1$th superconvergence rate for the derivative approximation
   and a $k+2$th superconvergence rate for the function value approximation of the LDG solution at the Radau points.
   In addition, we also prove that the LDG
   solution is superconvergent with a $k+2$th rate to the Gauss-Radau projection of the exact solution, and a $2k$th
    rate to the exact solution in the cell average sense. Numerical test data demonstrates that
    most of our error bounds are sharp, and to the best of our knowledge, the $k+2$th derivative superconvergence rate at the Radau points
    is reported for the first time in the literature.
    Our current and
    future works include convection-diffusion equations and 2-D problems, which would be more challenging and interesting.

\end{document}